\newtheorem{theorem}{Theorem}[section]
\newtheorem{lemma}[theorem]{Lemma}
\newtheorem{proposition}[theorem]{Proposition}
\newtheorem{corollary}[theorem]{Corollary}
\newtheorem{remark}[theorem]{Remark}
\numberwithin{equation}{section}
\date{\today}
\begin{document}

\title[Spectral invariants of the Dirichlet-to-Neumann map]{Spectral invariants of the Dirichlet-to-Neumann map associated to the Witten-Laplacian with potential}

\author{Xiaoming Tan}
\address{Beijing International Center for Mathematical Research, Peking University, Beijing 100871, China}
\email{tanxm@pku.edu.cn}

\subjclass[2020]{58C40, 53C21, 35P20, 58J50}

\keywords{Spectral invariants; Heat trace; Asymptotic expansion; Dirichlet-to-Neumann map; Steklov eigenvalues.\\
{\bf-----------------}\\
\hspace*{3mm} {\it Email address}: tanxm@pku.edu.cn\\
\hspace*{3mm} Beijing International Center for Mathematical Research, Peking University, Beijing 100871, China}

\begin{abstract}
    For a compact connected Riemannian manifold with smooth boundary, we establish an effective procedure, by which we can calculate all the coefficients of the spectral asymptotic formula of the Dirichlet-to-Neumann map associated to the Witten-Laplacian with potential. In particular, by computing the full symbol of the Dirichlet-to-Neumann map we explicitly give the first four coefficients. They are spectral invariants, which provide precise information concerning the volume, curvatures, drifting function and potential.
\end{abstract}

\maketitle 

\section{Introduction}

Let $(M,g)$ be a compact connected Riemannian manifold of dimension $n$ with smooth boundary $\partial M$. Given a smooth real-valued function $\phi \in C^{\infty}(M)$, which is called the drifting function. The triple $(M,g,e^{-\phi}dV)$ is customarily called a smooth metric measure space or manifolds with density, where $dV$ is the Riemannian volume measure on $M$. For any $u \in C^{\infty}(M)$, the Witten-Laplacian $\Delta_{\phi}$ (also called drift Laplacian, drifted Laplacian, drifting Laplacian, $\phi$-Laplacian, weighted Laplacian, or Bakry--\'{E}mery Laplacian) is defined by (see, for example, \cite{FutaLL13,Ara22,ChengZhou17})
\begin{align}\label{1.1}
    \Delta_{\phi}u:=\Delta u - g(\nabla\phi,\nabla u),
\end{align}
where $\Delta$ and $\nabla$ denote the Laplace--Beltrami operator and the gradient operator on the manifold, respectively.

It is well-known that the Witten-Laplacian $\Delta_{\phi}$ is a densely defined self-adjoint operator in the space of square-integrable functions on $M$ with respect to the measure $e^{-\phi}dV$, that is, for $u,v\in C^{\infty}_{0}(M)$, the following integration by parts formula holds (see \cite{FutaLL13,ChengZhou17})
\begin{align*}
    \int_{M}u(\Delta_{\phi}v) e^{-\phi}\,dV = \int_{M}v(\Delta_{\phi}u) e^{-\phi}\,dV  = -\int_{M}g(\nabla u,\nabla v)e^{-\phi}\,dV.
\end{align*}

Given a potential function $V \in C^{\infty}(M)$, consider the Witten-Laplacian with potential $\Delta_{\phi}+V$, we define the corresponding Dirichlet-to-Neumann map $\Lambda: H^{\frac{1}{2}}(\partial M) \to H^{-\frac{1}{2}}(\partial M)$ as follows
\begin{equation}\label{1.5}
    \Lambda(f):=\frac{\partial u}{\partial \nu} \Big|_{\partial M},
\end{equation}
where $\nu$ is the outward unit normal vector to the boundary $\partial M$ and $u$ solves the corresponding Dirichlet problem
\begin{equation}\label{1.4}
    \begin{cases}
        (\Delta_{\phi}+V)u = 0 & \text{in}\ M, \\
        u = f  \quad &\text{on}\ \partial M.
    \end{cases}
\end{equation}
We always assume that $0$ is not a Dirichlet eigenvalue for the operator $\Delta_{\phi}+V$. The Dirichlet-to-Neumann map $\Lambda$ is a self-adjoint pseudodifferential operator of order one (see \cite{LeeUhlm89,Taylor11.2}).

In the present paper, we consider the Steklov eigenvalue problem for the operator $\Delta_{\phi}+V$ as follows
\begin{align}\label{1.4.1}
    \begin{cases}
        (\Delta_{\phi}+V)u=0 \quad & \text{in}\ M, \\
        \Lambda(u|_{\partial M}) = \lambda u & \text{on}\ \partial M.
    \end{cases}
\end{align}
It follows from the variational principle that the spectrum of problem \eqref{1.4.1} consists of a discrete sequence
\begin{align*}
    0 < \lambda_{1} \leqslant \lambda_{2} \leqslant \cdots \leqslant \lambda_{k} \leqslant \cdots \to +\infty
\end{align*}
with each eigenvalue repeated according to its multiplicity. The corresponding eigenfunctions $\{u_k\}_{k \geqslant 1}$ form an orthogonal basis in $L^{2}(\partial M)$.

This problem originates from inverse spectral problems. One hopes to recover the geometry of a manifold from the set of the known data (the spectrum of a differential or pseudodifferential operator). Note that the Steklov spectrum of \eqref{1.4.1} coincides with the spectrum of the Dirichlet-to-Neumann map $\Lambda$, they are physical quantities that can be measured experimentally. Apparently, the knowledge provided by the Dirichlet-to-Neumann map $\Lambda$, i.e., the set of the Cauchy data $\{(u|_{\partial M},\frac{\partial u}{\partial \nu}|_{\partial M})\}$ is equivalent to the information given by all the Steklov eigenvalues of \eqref{1.4.1} and the corresponding eigenfunctions.

Here we briefly recall the classical Steklov problem associated with the Laplace--Beltrami operator as follows:
\begin{equation*}
    \begin{cases}
        \Delta u = 0 \quad & \text{in}\  M, \\
        \frac{\partial u}{\partial \nu} = \tau u & \text{on}\ \partial  M.
    \end{cases}
\end{equation*}
The classical Dirichlet-to-Neumann map $\Lambda$ (also known as the voltage-to-current map) is defined by $\Lambda(f) := \frac{\partial u}{\partial \nu}|_{\partial  M}$, where $u$ solves the corresponding Dirichlet problem: $\Delta u = 0$ in $M$ and $u = f$ on $\partial M$.  This problem is an eigenvalue problem with the spectral parameter in the boundary condition. The study of the spectrum of the classical Dirichlet-to-Neumann map $\Lambda$ was initiated by Steklov (Stekloff) \cite{Stekloff02} in 1902. Note that the Steklov spectrum coincides with that of the classical Dirichlet-to-Neumann map. Eigenvalues and eigenfunctions of the classical Dirichlet-to-Neumann map $\Lambda$ have a number of applications in physics, such as fluid mechanics, heat transmission and vibration problems (see \cite{FoxKuttler83,KopaKrein01}). The Steklov spectrum also plays a fundamental role in mathematical analysis of photonic crystals (see \cite{Kuch01}). The classical Steklov problem has attracted much attention in recent years.

Let $\{\tau_{k}\}_{k\geqslant 1}$ be the classical Steklov eigenvalues, that is, the eigenvalues of the classical Dirichlet-to-Neumann map. It follows from Weyl's law for classical Steklov eigenvalues that the eigenvalue counting function $N(\tau):= \#(k|\tau_{k} \leqslant \tau)$ of the classical Dirichlet-to-Neumann map $\Lambda$ satisfies the following asymptotic formula (see \cite{Sandgren55}):
\begin{equation}\label{1.4.2}
    N(\tau)
    = \frac{\operatorname{Vol}(B^{n-1})}{(2\pi)^{n-1}} \operatorname{Vol}(\partial  M) \tau^{n-1} + o(\tau^{n-1}) \quad \text{as}\ \tau \to +\infty.
\end{equation}
Equivalently,
\begin{equation*}
    \sum_{k=1}^{\infty} e^{-t \tau_{k}}
    = \operatorname{Tr} e^{-t \Lambda} \sim \frac{\Gamma(n) \operatorname{Vol}(B^{n-1})}{(2\pi)^{n-1}t^{n-1}} \operatorname{Vol}(\partial  M) \quad \text{as}\ t \to 0^+, 
\end{equation*}
where $B^{n-1}$ is the $(n-1)$-dimensional Euclidean unit ball and $\Gamma(n)$ is the Gamma function. This shows that one can ``hear'' the volume $\operatorname{Vol}(\partial  M)$ of the boundary from the first term of the above asymptotic expansion. A sharp form of \eqref{1.4.2} was given in \cite{Liu11}.

Generally, for the eigenvalues of the classical Dirichlet-to-Neumann map $\Lambda$ associated with the Laplace--Beltrami operator $\Delta$, the heat trace $\operatorname{Tr} e^{-t \Lambda}$ admits an asymptotic expansion
\begin{equation*}
    \sum_{k=1}^{\infty} e^{-t \tau_{k}}
    = \operatorname{Tr} e^{-t \Lambda}
    = \sum_{k=0}^{n-1} \tilde{a}_{k} t^{-n+k+1} + o(1)\quad \text{as}\ t \to 0^+,
\end{equation*}
where the coefficients $\tilde{a}_k = \int_{\partial  M} \tilde{a}_k(x^{\prime}) \,dS,x^{\prime}\in\partial M,0\leqslant k \leqslant n-1$, are spectral invariants of the classical Dirichlet-to-Neumann map. Liu \cite{Liu15} obtained the first four coefficients $\tilde{a}_k(x^{\prime})\ (k=0,1,2,3)$ (see \cite{PoltSher15} for the first three coefficients by a different method). For other geometric invariants, Liu \cite{Liu22p,Liu19} gave the first four coefficients of the asymptotic expansion of heat trace for the polyharmonic Steklov operator and the first two coefficients of the asymptotic expansion of heat trace for the elastic Dirichlet-to-Neumann map. Liu \cite{Liu22S,Liu21} also gave the first two coefficients of the asymptotic expansions of heat traces of the Stokes operator and the Navier--Lam\'{e} operator. In \cite{WangWang19}, the authors gave the first four relative heat invariants (the differences between the coefficients of asymptotic expansions of heat traces for Schr\"{o}dinger operator and that of Laplace--Beltrami operator). In a joint work with Liu, we obtained the first four coefficients of the asymptotic expansion of heat trace for the magnetic Dirichlet-to-Neumann map \cite{LiuTan23} and gave the first two coefficients of the asymptotic expansion of heat trace for the thermoelastic Dirichlet-to-Neumann map \cite{LiuTan22.2}, we also obtained the first two coefficients of the asymptotic expansions of traces of the thermoelastic operators with the Dirichlet and Neumann conditions \cite{LiuTan22.1} on a Riemannian manifold. We refer the reader to \cite{BranGilk90,Gilkey04,Gilkey95,Gilkey75,GilkeyGrubb98} for more results of this topic.

The classical Dirichlet-to-Neumann map is also connected with the famous Calder\'{o}n problem (see \cite{Cald80}), which is to determine the conductivity in the interior of a conductor by boundary measurements from the knowledge of the map $\Lambda$. After the celebrated paper \cite{SylvUhlm87}, plenty of similar problems for elliptic equations have been intensively investigated (see \cite{Uhlmann14}).

It is natural to raise the following interesting problem: what geometric information about manifolds can be explicitly obtained by providing the Steklov spectrum of \eqref{1.4.1}? Once this problem is solved, one can recover the geometry of a manifold from the Steklov spectrum. In this paper, we give an affirmative answer for this problem. This is analogous to the well-known Kac's problem \cite{Kac66}, i.e., is it possible to ``hear'' the shape of a domain just by ``hearing'' all the eigenvalues of the Dirichlet Laplacian? (See \cite{Loren47,Protter87,Weyl12}).

To obtain more geometric information about the manifold, we study the Steklov problem \eqref{1.4.1} and the asymptotic expansion of $\operatorname{Tr} e^{-t\Lambda}$ (the so-called ``heat kernel method''). The coefficients of the asymptotic expansion are spectral invariants, which are metric invariants of the boundary of the manifold, and they contain a lot of geometric and topological information about the manifold (see \cite{Edward91,Gilkey75,Kac66}). Spectral invariants are of great importance in spectral geometry, and they are also connected with many physical concepts (see \cite{ANPS09,Full94}) and have increasingly extensive applications in physics since they describe corresponding physical phenomena. The more the spectral invariants we get, the more the geometric and topological information about the manifold we know. However, computations of spectral invariants are challenging problems (see \cite{Gilkey75,Grubb86,Liu11,SafarovVass97}). Let us point out that the complexity of the operator $\Delta_{\phi}+V$ directly leads to that the calculations of some quantities (for example, spectral invariants) become much more difficult than those of the Laplace--Beltrami operator. To the best of our knowledge, it has not previously been systematically applied in the context of the Steklov problem \eqref{1.4.1}. By the theory of pseudodifferential operators and symbol calculus, we can calculate all the coefficients $a_k\ (0 \leqslant k \leqslant n-1)$ of the asymptotic expansion. In particular, we explicitly give the first four coefficients $a_0$, $a_1$, $a_2$, and $a_3$.

\subsection{Notations and main result}

We denote by $\tilde{R}_{jklm}$, $\tilde{R}_{jk}$, and $\tilde{R}$ the Riemann curvature tensor, the Ricci tensor, and the scalar curvature of $M$, respectively, and denote by  $R_{\alpha\beta\gamma\rho}$, $R_{\alpha\beta}$, $R$, $h_{\alpha\beta}$, $\kappa_\alpha$, and $H := \sum_\alpha \kappa_\alpha$, the Riemann curvature tensor, the Ricci tensor, the scalar curvature, the second fundamental form, the principal curvatures, and the mean curvature of the boundary $\partial M$, respectively. The covariant derivatives $\nabla_{\frac{\partial}{\partial x_l}}\tilde{R}_{jk}:=(\nabla_{\frac{\partial}{\partial x_l}}\operatorname{\widetilde{Ric}})\big(\frac{\partial}{\partial x_j},\frac{\partial}{\partial x_k}\big)$. The partial derivatives $\phi_{k}:=\frac{\partial \phi}{\partial x_k}$, $\phi_{jk}:=\frac{\partial^2 \phi}{\partial x_j \partial x_k}$, and $\phi_{jkl}:=\frac{\partial^3 \phi}{\partial x_j \partial x_k \partial x_l}$. Let the Roman indices run from 1 to $n$, whereas the Greek indices run from 1 to $n-1$, unless otherwise specified. We also denote by $\Gamma(n)$ the Gamma function and $\omega_{n-2}=\operatorname{Vol}(\mathbb{S}^{n-2})=2\pi^{(n-1)/2}/\Gamma((n-1)/2)$ the volume of the $(n-2)$-dimensional Euclidean unit sphere $\mathbb{S}^{n-2}$.

The main result of this paper is the following theorem.
\begin{theorem}\label{thm1.1}
    Suppose that $(M,g)$ is a compact connected Riemannian manifold of dimension $n$ with smooth boundary $\partial M$. Let $\{\lambda_k\}_{k \geqslant 1}$ be the eigenvalues of the Dirichlet-to-Neumann map $\Lambda$ associated to the Witten-Laplacian with potential $\Delta_{\phi}+V$. Then the heat trace of $\Lambda$ admits an asymptotic expansion
    \begin{equation*}
        \sum_{k=1}^{\infty} e^{-t \lambda_{k}}
        = \operatorname{Tr} e^{-t \Lambda}
        = \sum_{k=0}^{n-1} a_{k} t^{-n+k+1} + o(1)\quad \text{as}\ t \to 0^+, 
    \end{equation*}
    where the coefficients $a_k = \int_{\partial M} a_k(x^{\prime})\,dS,\,x^{\prime}\in\partial M,\,0 \leqslant k \leqslant n-1$, are spectral invariants of $\Lambda$, which can be explicitly calculated.
    
    In particular, the first four coefficients are given by
    \begin{align}
        a_0(x^{\prime}) & = \frac{\omega_{n-2}\Gamma(n-1)}{(2\pi)^{n-1}}, \quad n \geqslant 1, \label{a0}\\
        a_1(x^{\prime}) & = \frac{\omega_{n-2}\Gamma(n-1)}{2(2\pi)^{n-1}}\Big(\frac{n-2}{n-1}H+\phi_{n}\Big), \quad n \geqslant 2, \label{a1}\\
        a_2(x^{\prime}) & = \frac{\omega_{n-2}\Gamma(n-2)}{4(2\pi)^{n-1}}
        \biggl[
            \frac{n^3-4n^2+n+8}{2(n^2-1)}H^2 + \frac{n(n-3)}{2(n^2-1)} \sum_{\alpha=1}^{n-1} \kappa_{\alpha}^2  \notag\\
            & \quad + \frac{n-2}{2(n-1)}\tilde{R} - \frac{n-4}{6(n-1)}R + \frac{n-2}{2}\phi_{n}^2 + \frac{n^2-5n+5}{n-1}H\phi_{n}  \notag\\
            & \quad + \Delta\phi - \frac{1}{2}|\nabla\phi|^2 + 2V 
        \biggr], \quad n \geqslant 3, \label{a2}\\
        a_3(x^{\prime}) & = \frac{\omega_{n-2}\Gamma(n-3)}{8(2\pi)^{n-1}}
        \biggl[
            \frac{n^5-5n^4-10n^3+52n^2+2n-114}{6(n^2-1)(n+3)}H^3 \notag\\
            & \quad + \frac{n^4-n^3-12n^2+22n+6}{2(n^2-1)(n+3)}H\sum_{\alpha=1}^{n-1} \kappa_{\alpha}^2 - \frac{4(n-3)(n-2)}{3(n^2-1)(n+3)}\sum_{\alpha=1}^{n-1} \kappa_{\alpha}^3  \notag\\
            & \quad + \frac{n^3-6n^2+2n+14}{2(n^2-1)}H \tilde{R} - \frac{3n^3-20n^2+12n+42}{6(n^2-1)}HR \notag\\
            & \quad + \frac{2(n^2-3n+1)}{n^2-1}\sum_{\alpha=1}^{n-1} \kappa_{\alpha}\tilde{R}_{\alpha\alpha} - \frac{2n(3n-8)}{3(n^2-1)}\sum_{\alpha=1}^{n-1} \kappa_{\alpha}R_{\alpha\alpha} + \frac{n-2}{n-1} \nabla_{\frac{\partial}{\partial x_n}}\tilde{R}_{nn} \notag\\
            & \quad + \frac{n^4-9n^3+20n^2+7n-31}{2(n^2-1)}H^2\phi_n  + \frac{(n-3)(n^2-6n+6)}{2(n-1)}H\phi_n^2 \notag\\
            & \quad + \frac{(n-2)(n-3)}{6}\phi_n^3 + \frac{n^3-7n^2+9n+1}{2(n^2-1)}\phi_n\sum_{\alpha=1}^{n-1} \kappa_{\alpha}^2 + \sum_{\alpha=1}^{n-1} \phi_{\alpha}^2\kappa_{\alpha} - H\phi_{nn} \notag\\
            & \quad + \frac{n^2-6n+7}{2(n-1)}\tilde{R}\phi_n  - \frac{n^2-10n+15}{6(n-1)}R\phi_n + \frac{(n-2)(n-3)}{6(n-1)}\sum_{\alpha=1}^{n-1} \phi_{n\alpha\alpha} \notag\\
            & \quad + \Big(\frac{\partial }{\partial x_n} + (n-3)\phi_n + (n-4)H\Big) \Big(\Delta\phi - \frac{1}{2}|\nabla\phi|^2 + 2V\Big) 
        \biggr], \quad n \geqslant 4. \label{a3}
    \end{align}
\end{theorem}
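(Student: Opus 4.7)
The plan is to compute the full symbol of the Dirichlet-to-Neumann map $\Lambda$ as a classical pseudodifferential operator of order one on $\partial M$, and then to extract the heat trace coefficients from that symbol via the standard resolvent method (cf.\ \cite{Liu15,LiuTan23}). First I would fix boundary normal coordinates $(x',x_{n})$ in a collar neighborhood of $\partial M$ so that the metric takes the form $g = dx_{n}^{2}+g_{\alpha\beta}(x)\,dx^{\alpha}dx^{\beta}$. In these coordinates the Witten-Laplacian with potential reads
\begin{equation*}
    \Delta_{\phi}+V = \partial_{n}^{2}+E(x;\partial_{x'})\partial_{n}+Q(x;\partial_{x'})+V,
\end{equation*}
where $E$ collects the first order normal coefficient (involving $\tfrac{1}{2}g^{\alpha\beta}\partial_{n}g_{\alpha\beta}-\phi_{n}$) and $Q$ is a tangential second order operator depending smoothly on $x_{n}$ that also encodes the tangential gradient of $\phi$.

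Next I would factor the operator microlocally near $\partial M$ as
\begin{equation*}
    \Delta_{\phi}+V \;=\; \bigl(\partial_{n}+B(x;D_{x'})\bigr)\bigl(\partial_{n}-A(x;D_{x'})\bigr) \pmod{C^{\infty}},
\end{equation*}
where $A$ is a classical $\Psi$DO of order one whose restriction to $\partial M$ agrees with $\Lambda$ (up to sign and a smoothing error). Writing $\sigma(A) \sim \sum_{j\leqslant 1}a^{(j)}(x',\xi')$ with each $a^{(j)}$ positively homogeneous of degree $j$ in $\xi'$, and matching homogeneous components in the operator identity, one obtains a triangular recursion for the $a^{(j)}$. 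The principal symbol is $a^{(1)}=|\xi'|$; the drifting function first enters $a^{(0)}$ through $\phi_{n}$, and the potential $V$ together with $|\nabla\phi|^{2}$ appear beginning with $a^{(-1)}$. I would carry the recursion down to $a^{(-2)}$, which is the level needed for $a_{3}$.

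To convert symbols into heat trace coefficients, since $\Lambda$ is positive self-adjoint of order one, a contour integral of the resolvent symbol produces
\begin{equation*}
    a_{k} \;=\; \frac{1}{(2\pi)^{n-1}}\int_{\partial M}\!\!\int_{\mathbb{R}^{n-1}} b_{k}(x',\xi')\,d\xi'\,dS,
\end{equation*}
where the $b_{k}$ are assembled from the $a^{(j)}$ by the standard $e^{-t\Lambda}$ symbol recursion. Each angular integral over the cosphere $|\xi'|=1$ reduces to a polynomial in the components of $\xi'$ and gives the $\omega_{n-2}$ factors in \eqref{a0}--\eqref{a3}, while the remaining radial integrals supply the $\Gamma(n-k)$ factors.

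The main obstacle is the explicit bookkeeping required at the $a^{(-2)}$ level, which governs $a_{3}$. One must simultaneously track contributions from the interior Ricci component $\tilde{R}_{nn}$, the second fundamental form and its contractions $\sum_{\alpha}\kappa_{\alpha}^{p}$, the tangential Hessian of $\phi$ and of $V$, and the commutator terms $[\partial_{n},A]$ that arise in the recursion, then rewrite the intrinsic quantities of $M$ in terms of the boundary geometry via the Gauss and Codazzi equations. This is what produces the complicated rational coefficients such as $\frac{n^{5}-5n^{4}-10n^{3}+52n^{2}+2n-114}{6(n^{2}-1)(n+3)}$ appearing in $a_{3}$. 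The reappearance of the invariant combination $\Delta\phi-\tfrac{1}{2}|\nabla\phi|^{2}+2V$ in both $a_{2}$ and $a_{3}$ reflects a natural grouping produced by the factorization, and recognizing this combination at each stage is the organizational key to keeping the final expressions manageable.
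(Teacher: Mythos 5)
Your proposal is correct at the level of strategy and follows the same general line as the paper: fix boundary normal coordinates, factorize $\Delta_\phi+V$ microlocally near $\partial M$ as a product of two first-order operators (the paper writes $\partial_n^2+B\partial_n+C=(\partial_n+B-W)(\partial_n+W)$, a sign variant of your $(\partial_n+B)(\partial_n-A)$), solve the resulting full-symbol equation recursively from $w_1=|\xi'|$ downward, identify $\Lambda$ with $W$ on the boundary modulo smoothing, and then pass to the resolvent $(\Lambda-\tau)^{-1}$ and a contour integral to read off $a_k(x')$. The $\omega_{n-2}$ and Gamma factors do come out of the angular and radial $\xi'$-integrals exactly as you say, via the paper's Lemmas~4.1 and~4.2.

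Where you diverge from the paper, and where your version would be considerably more laborious, is in the bookkeeping stage. You propose to compute all contributions directly down to $a^{(-2)}$ and then convert intrinsic curvature of $M$ to boundary geometry via Gauss--Codazzi. The paper instead exploits the linear structure of the symbol recursion to split each coefficient as $a_k(x_0)=\tilde a_k(x_0)+a_k(\phi,V)$, where $\tilde a_k$ is the already-known heat coefficient of the \emph{classical} Dirichlet-to-Neumann map (quoted from Liu's earlier work) and $a_k(\phi,V)$ collects only those terms in $s_{-1-k}$ that actually involve $\phi$ or $V$. Since $w_1$, $s_{-1}$ and the purely metric parts of the recursion are unchanged by the drifting function and potential, this reduction is legitimate and cuts the new computation dramatically: one never has to re-derive the curvature contractions $\sum_\alpha\kappa_\alpha^p$, $H\tilde R$, $\nabla_{\partial_n}\tilde R_{nn}$, etc.\ that already appear in the classical result. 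You identify the combination $\Delta\phi-\tfrac12|\nabla\phi|^2+2V$ as an organizing device, which is a true observation about the final answer, but the actual labor-saving idea in the paper is the $\tilde a_k+a_k(\phi,V)$ decomposition; without it, reproducing \eqref{a3} by direct expansion is substantially heavier (and error-prone), though in principle it would yield the same result.
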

    
\begin{remark}
    Theorem {\rm\ref{thm1.1}} generalizes the corresponding results for Laplace--Beltrami operator $($see {\rm \cite{Liu15,PoltSher15}}$)$ and for the perturbed polyharmonic operator $(-\Delta)^m+V$ for $m=1$ $($or Schr\"{o}dinger operator$)$ on a Riemannian manifold with boundary $($see {\rm \cite{Liu22p,WangWang19}}$)$.
\end{remark}

The expressions for $a_2(x^{\prime})$ and $a_3(x^{\prime})$ in Theorem \ref{thm1.1} can be simplified when the manifold has constant sectional curvature. Thus, we have the following corollary.
\begin{corollary}\label{cor1.2}
    Assume that the manifold $M$ has constant sectional curvature $K_0$, we then obtain
    \begin{align}
        a_2(x^{\prime}) & = \frac{\omega_{n-2}\Gamma(n-2)}{4(2\pi)^{n-1}}
        \biggl[
            \frac{(n-2)(n^2-n-4)}{2(n^2-1)}H^2 - \frac{2(n^2-3n-1)}{3(n^2-1)}R \notag\\
            & \quad + \frac{n(n-1)(n-2)}{n+1}K_0 + \frac{n-2}{2}\phi_{n}^2 + \frac{n^2-5n+5}{n-1}H\phi_{n}  \notag\\
            & \quad + \Delta\phi - \frac{1}{2}|\nabla\phi|^2 + 2V 
        \biggr], \quad n \geqslant 3, \label{b2}\\
        a_3(x^{\prime}) &= \frac{\omega_{n-2}\Gamma(n-3)}{8(2\pi)^{n-1}}
        \biggl[
            \frac{n^5-2n^4-25n^3+12n^2+164n-96}{6(n^2-1)(n+3)} H^3 \notag\\
            & \quad + \frac{n(n-2)(3n^4-12n^3-38n^2+108n-21)}{3(n^2-1)(n+3)} H K_0 \notag\\
            & \quad - \frac{3n^4-13n^3-44n^2+120n+72}{3(n^2-1)(n+3)} H R + \frac{2(3n^3-n^2-14n-12)}{3(n^2-1)(n+3)} \sum_{\alpha=1}^{n-1} \kappa_\alpha^3 \notag\\
            & \quad + \frac{(n-3)(n-5)(n^2-2)}{2(n^2-1)}H^2\phi_n  + \frac{(n-3)(n^2-6n+6)}{2(n-1)}H\phi_n^2 \notag\\
            & \quad + \frac{(n-2)(n-3)}{6}\phi_n^3 + \sum_{\alpha=1}^{n-1} \phi_{\alpha}^2\kappa_{\alpha} - H\phi_{nn} - \frac{2n^3-15n^2+16n+9}{3(n^2-1)}R\phi_n \notag\\
            & \quad + \frac{(n-1)(n^3-6n^2+6n+1)}{(n+1)}K_0\phi_n + \frac{(n-2)(n-3)}{6(n-1)}\sum_{\alpha=1}^{n-1} \phi_{n\alpha\alpha} \notag\\
            & \quad + \Big(\frac{\partial }{\partial x_n} + (n-3)\phi_n + (n-4)H\Big) \Big(\Delta\phi - \frac{1}{2}|\nabla\phi|^2 + 2V\Big) 
        \biggr], \quad n \geqslant 4. \label{b3}
    \end{align}
\end{corollary}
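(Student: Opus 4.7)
The plan is to derive Corollary \ref{cor1.2} from Theorem \ref{thm1.1} by a direct algebraic substitution, exploiting the curvature identities available under the constant sectional curvature hypothesis. No new pseudodifferential or analytic input is required; all of the symbol calculus and heat trace work is already contained in the proof of Theorem \ref{thm1.1}.

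First I would record the simplifications of the ambient curvature tensors under the assumption that $M$ has constant sectional curvature $K_0$:
\begin{equation*}
    \tilde{R}_{jklm} = K_0(g_{jl}g_{km}-g_{jm}g_{kl}), \qquad \tilde{R}_{jk} = (n-1)K_0\, g_{jk}, \qquad \tilde{R} = n(n-1)K_0.
\end{equation*}
Since the curvature tensor of a space form is parallel, $\nabla \tilde{R}_{jk} \equiv 0$, and in particular $\nabla_{\partial/\partial x_n}\tilde{R}_{nn}=0$, so the single covariant-derivative term in \eqref{a3} drops out. Working in boundary normal coordinates with a local orthonormal boundary frame aligned with the principal directions at the point in question (so that $h_{\alpha\beta}=\kappa_\alpha\delta_{\alpha\beta}$), the Gauss equation gives
\begin{equation*}
    R_{\alpha\alpha}=(n-2)K_0+H\kappa_\alpha-\kappa_\alpha^2, \qquad R=(n-1)(n-2)K_0+H^2-\sum_\alpha \kappa_\alpha^2,
\end{equation*}
and consequently
\begin{equation*}
    \sum_\alpha \kappa_\alpha \tilde{R}_{\alpha\alpha}=(n-1)K_0 H, \qquad \sum_\alpha \kappa_\alpha R_{\alpha\alpha}=n(n-2)K_0 H+H^3-HR-\sum_\alpha \kappa_\alpha^3.
\end{equation*}
These are the only identities needed.

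Next I would substitute these identities into \eqref{a2} and \eqref{a3} and collect like terms, treating $H$, $R$, $\sum_\alpha\kappa_\alpha^3$, $K_0$, and the various $\phi$-quantities as independent monomials in the resulting polynomial expression. For $a_2$, this amounts to replacing $\tilde{R}$ by $n(n-1)K_0$ and $\sum_\alpha \kappa_\alpha^2$ by $(n-1)(n-2)K_0+H^2-R$ inside \eqref{a2}; combining the resulting coefficients of $H^2$, of $R$, and of $K_0$ yields \eqref{b2}. For $a_3$, I would similarly eliminate $\tilde{R}$, $\sum_\alpha\kappa_\alpha\tilde{R}_{\alpha\alpha}$, $\sum_\alpha\kappa_\alpha R_{\alpha\alpha}$, and both occurrences of $\sum_\alpha \kappa_\alpha^2$ (inside the $H\sum_\alpha\kappa_\alpha^2$ and $\phi_n\sum_\alpha\kappa_\alpha^2$ terms), and drop the $\nabla_{\partial/\partial x_n}\tilde{R}_{nn}$ term.

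The only real obstacle is bookkeeping: for $a_3$, the substitution mixes several constant-curvature identities, each contributing to several of the surviving monomials. The coefficients of $H^3$, $HK_0$, $HR$, $\sum_\alpha\kappa_\alpha^3$, $H^2\phi_n$, $K_0\phi_n$, and $R\phi_n$ all receive contributions from more than one source term in \eqref{a3} and must be combined over a common denominator (for instance $6(n^2-1)(n+3)$ for the terms cubic in curvature) before being matched against the stated numerators in \eqref{b3}. Once every collected coefficient is verified to agree, the corollary follows.
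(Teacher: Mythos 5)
Your proposal follows essentially the same route as the paper's Section~\ref{s7}: record the constant-curvature simplifications of $\tilde{R}_{jklm}$, $\tilde{R}_{jk}$, $\tilde{R}$, observe $\nabla_{\partial/\partial x_n}\tilde{R}_{nn}=0$, use the Gauss equation to express $\sum_\alpha\kappa_\alpha^2$, $\sum_\alpha\kappa_\alpha\tilde R_{\alpha\alpha}$, $\sum_\alpha\kappa_\alpha R_{\alpha\alpha}$ in terms of $H$, $R$, $K_0$, $\sum_\alpha\kappa_\alpha^3$, and substitute into \eqref{a2}--\eqref{a3}. One small slip: you wrote $\tilde{R}_{jklm}=K_0(g_{jl}g_{km}-g_{jm}g_{kl})$, which has the opposite sign from the paper's (correct, given its convention $K(P)=R(e_1,e_2,e_2,e_1)$ and $R_{jk}=\sum_{l,m}g^{lm}R_{jlmk}$) formula $\tilde{R}_{jklm}=K_0(g_{jm}g_{kl}-g_{jl}g_{km})$; your version would force $\tilde{R}_{jk}=-(n-1)K_0 g_{jk}$, contradicting the next identity you state. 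Since all your downstream consequences ($\tilde{R}_{jk}=(n-1)K_0 g_{jk}$, $\tilde{R}=n(n-1)K_0$, $\tilde{R}_{n\alpha\alpha n}=K_0$, and the Gauss-equation reductions) are correct and match the paper, this is just a transcription typo rather than a gap. Your argument for $\nabla_{\partial/\partial x_n}\tilde{R}_{nn}=0$ via parallelism of the curvature of a space form is a slightly cleaner justification than the paper's direct check that $\tilde{R}_{nn}(x_0)$ is constant and $\Gamma^k_{nn}(x_0)=0$, but yields the same conclusion.
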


The main ideas of this paper are as follows. Since the Dirichlet-to-Neumann map $\Lambda$ is a pseudodifferential operator defined on the boundary $\partial M$, we study this kind of operator by the theory of pseudodifferential operators and symbol calculus. There is an effective method (see \cite{Cek20,Grubb86,Hormander85.3}) to calculate its full symbol explicitly. By this method, we flatten the boundary and induce a Riemannian metric in a neighborhood of the boundary. Thus, we obtain a local representation for the Dirichlet-to-Neumann map
\begin{align*}
    \Lambda(u|_{\partial M}) = -\frac{\partial u}{\partial x_n}\Big|_{\partial M}
\end{align*}
in boundary normal coordinates. 

We then look for the following factorization: there exists a pseudodifferential operator $W=W(x,\partial_{x^\prime})$ of order one in $x^\prime\in\partial M$ depending smoothly on $x_n$ such that
\begin{align*}
    \Delta_{\phi}+V
    = \frac{\partial^2 }{\partial x_n^2} + B \frac{\partial }{\partial x_n} + C
    = \Big(\frac{\partial }{\partial x_n} + B - W\Big)\Big(\frac{\partial }{\partial x_n} + W\Big),
\end{align*}
where $B=B(x)$ and $C=C(x,\partial_{x^\prime})$ is a differential operator. As a result, we obtain the equation
\begin{align*}
    W^2 - BW - \Bigl(\frac{\partial }{\partial x_n}  W - W \frac{\partial }{\partial x_n}\Bigr) + C = 0.
\end{align*}
Let $b=b(x)$, $c=c(x,\xi^\prime)$, and $w=w(x,\xi^\prime)$ be the full symbols of $B$, $C$, and $W$, respectively. We then solve the corresponding full symbol equation
\begin{align*}
    \sum_{J} \frac{(-i)^{|J|}}{J !} \partial_{\xi^{\prime}}^{J}w \, \partial_{x^\prime}^{J}w - bw - \frac{\partial w}{\partial x_n} + c = 0,
\end{align*}
where the sum is over all multi-indices $J$, $x^{\prime}=(x_1,\dots,x_{n-1})$ and $\xi^{\prime}=(\xi_1,\dots,\xi_{n-1})$. Note that $W(x,\partial_{x^\prime})$ is a pseudodifferential operator that is independent of $\partial_{x_n}$, but it depends on $x_n$ by construction. Hence, we get the full symbol of $W$, which implies that $W$ is obtained on the boundary $\partial M$ modulo a smoothing operator. 

The trace $\operatorname{Tr} e^{-t \Lambda}$, $t>0$, of the associated heat kernel $\mathcal{K}(t,x^{\prime},y^{\prime})$ admits an asymptotic expansion (see \cite{Agra87,DuisGuill75,GrubbSeeley95,Liu15}), in local coordinates $(x^{\prime},\xi^{\prime})$,
\begin{align*}
    \sum_{k=1}^{\infty} e^{-t \lambda_{k}}
    & = \int_{\partial M} \mathcal{K}(t,x^{\prime},x^{\prime}) \,dS \\
    & = \int_{\partial M}
    \bigg[ 
        \frac{1}{(2\pi)^{n-1}} \int_{T^{*}_{x^\prime}(\partial M)} e^{i \langle x^{\prime} - x^{\prime},\xi^{\prime} \rangle}
        \bigg( 
            \frac{i}{2\pi}\int_{\mathcal{C}} e^{-t\tau}\sigma((\Lambda-\tau)^{-1}) \,d\tau
        \bigg) \,d\xi^{\prime} 
    \bigg] \,dS \\
    & = \int_{\partial M} 
    \bigg[ 
        \frac{1}{(2\pi)^{n-1}} \int_{T^{*}_{x^\prime}(\partial M)}
        \bigg(
            \frac{i}{2\pi}\int_{\mathcal{C}} e^{-t\tau}\sum_{j \leqslant -1} s_{j}(x^{\prime},\xi^{\prime},\tau) \,d\tau 
        \bigg) \,d\xi^{\prime}
    \bigg] \,dS \\
    & \sim \sum_{k=0}^{\infty} a_k t^{-n+k+1} + \sum_{l=0}^{\infty} b_{l} t^l \log t \quad \text{as}\ t \to 0^{+},
\end{align*}
where $T^{*}_{x^\prime}(\partial M)$ is the cotangent space at the point $x^\prime\in\partial M$, $\mathcal{C}$ is a contour around the positive real axis, and $\sigma((\Lambda-\tau)^{-1})\sim\sum_{j \leqslant -1} s_{j}(x^{\prime},\xi^{\prime},\tau)$ is the full symbol of the pseudodifferential operator $(\Lambda-\tau)^{-1}$.

By lots of complicated calculations, we finally establish an effective procedure to calculate all the coefficients of the asymptotic expansion of heat trace and explicitly give the first four coefficients. Note that the method of calculating the coefficients in this paper is different from \cite{Liu15,PoltSher15}. Indeed, inspired by \cite{Liu22p}, at the origin $x_0\in\partial M$ in boundary normal coordinates, we separate the coefficients into two parts, i.e.,
\begin{align*}
    a_{k}(x_0) = \tilde{a}_{k}(x_0) + a_{k}(\phi,V)
\end{align*}
for $0 \leqslant k \leqslant n-1$, where $\tilde{a}_{k}(x_0)$ are the spectral invariants of the classical Dirichlet-to-Neumann map associated with the Laplace--Beltrami operator and $a_{k}(\phi,V)$ only involve $\phi$ and $V$ at $x_0$. Finally, we obtain $a_{k}(x^\prime)$ for any $x^\prime\in\partial M$ since $x_0$ is arbitrary. This method in the present paper is not standard but more effective than that of direct computation.

This paper is organized as follows. In Section \ref{s2}, we briefly introduce some basic concepts of differential operators, curvatures, boundary normal coordinates, and pseudodifferential operators. Moreover, we give the explicit expression for the Dirichlet-to-Neumann map in boundary normal coordinates. In Section \ref{s3}, we derive the factorization of the operator $\Delta_{\phi}+V$, get the full symbols of some pseudodifferential operators, and give the procedure to compute the coefficients of the asymptotic expansion of heat trace. In Section \ref{s4}, we compute the first two coefficients $a_0(x^\prime)$ and $a_1(x^\prime)$. In Sections \ref{s5} and \ref{s6}, we compute $a_2(x^\prime)$ and $a_3(x^\prime)$, respectively. Finally, Section \ref{s7} is devoted to proving Corollary \ref{cor1.2}.

\section{Preliminaries}\label{s2}

In this section, we will briefly introduce some differential operators, curvatures (cf. \cite[Chap.\,1]{ChowLL06}), boundary normal coordinates (see \cite{LeeUhlm89} or \cite[p.\,532]{Taylor11.2}), pseudodifferential operators and symbols (cf. \cite[Chap.\,7]{Taylor11.2}).

\subsection{Differential operators and curvatures}

First, we introduce some differential operators and curvatures in Riemannian geometry (cf. \cite[Chap.\,1]{ChowLL06}).

Let $(M,g)$ be a smooth Riemannian manifold of dimension $n$. In the local coordinates $\{x_j\}_{j=1}^n$, we denote by $\big\{\frac{\partial}{\partial x_j}\big\}_{j=1}^n$ and $\{dx_j\}_{j=1}^n$, respectively, the natural basis for the tangent space $T_x M$ and the cotangent space $T_x^{*} M$ at the point $x\in M$. Then, the Riemannian metric $g$ is given by  $g = \sum_{j,k} g_{jk} \,dx_j\otimes dx_k$.

For two smooth vector fields $X= \sum_{j} X^j \frac{\partial}{\partial x_j}$ and $Y=\sum_{j}  Y^j \frac{\partial}{\partial x_j}$, the inner product with respect to the metric $g$ is denoted by
\begin{equation}\label{1.1.1}
    g(X,Y) =\sum_{j,k} g_{jk} X^j Y^k.
\end{equation}
Let $\nabla_{\frac{\partial}{\partial x_j}}$ be the covariant derivative with respect to $\frac{\partial}{\partial x_j}$. Then,
\begin{align}\label{9.19}
    \nabla_{\frac{\partial}{\partial x_j}}\frac{\partial}{\partial x_k}=\sum_{l}  \Gamma^l_{jk}\frac{\partial}{\partial x_l}.
\end{align}
Here the Christoffel symbols
\begin{align}\label{chris}
    \Gamma^{l}_{jk} 
    = \frac{1}{2} \sum_{m} g^{lm}(g_{jm,k} + g_{km,j} - g_{jk,m}),
\end{align}
where $g_{jk,m}=\frac{\partial g_{jk}}{\partial x_m}$. From \eqref{chris} we have
\begin{align}
    \label{chr1} \sum_{m} \Gamma^{m}_{jk} g_{lm}
    &= \frac{1}{2} (g_{jl,k} + g_{kl,j} - g_{jk,l}).
\end{align}

The gradient operator is denoted by
\begin{align*}
    \nabla u
    = \sum_{j,k} g^{jk} \frac{\partial u}{\partial x_j}\frac{\partial}{\partial x_k} , \quad u \in C^{\infty}( M),
\end{align*}
where $(g^{jk}) = (g_{jk})^{-1}$. The Laplace--Beltrami operator is given by
\begin{align*}
    \Delta u
    =  \sum_{j,k}g^{jk}\bigg(\frac{\partial^2 u}{\partial x_j \partial x_k} - \sum_{l}\Gamma^l_{jk}\frac{\partial u}{\partial x_l}\bigg), \quad u \in C^{\infty}( M).
\end{align*}

The curvature tensor $\mathcal{R}$ of the Riemannian manifold is given by
\begin{align*}
    \mathcal{R}\Big(\frac{\partial}{\partial x_j},\frac{\partial}{\partial x_k}\Big)\frac{\partial}{\partial x_l}:=\sum_{m}R^m_{jkl}\frac{\partial}{\partial x_m},
\end{align*}
where
\begin{align*}
    R^{m}_{jkl} 
    = \frac{\partial \Gamma^{m}_{kl}}{\partial x_j} - \frac{\partial \Gamma^{m}_{jl}}{\partial x_k} + \sum_{p}(\Gamma^{p}_{kl} \Gamma^{m}_{jp} - \Gamma^{p}_{jl} \Gamma^{m}_{kp}).
\end{align*}

The Riemann curvature tensor is defined by
\begin{align*}
    R_{jklm}
    =R\Bigl(\frac{\partial}{\partial x_j},\frac{\partial}{\partial x_k},\frac{\partial}{\partial x_l},\frac{\partial}{\partial x_m}\Bigr)
    :=g\Big( \mathcal{R}\Big(\frac{\partial}{\partial x_j},\frac{\partial}{\partial x_k}\Big)\frac{\partial}{\partial x_l}, \frac{\partial}{\partial x_m}\Big).
\end{align*}
Some basic symmetries of the Riemann curvature tensor are
\begin{align*}
    R_{jklm}=R_{lmjk}=-R_{kjlm}=-R_{jkml}.
\end{align*}
In local coordinates,
\begin{align}\label{rm}
    R_{jklm}
    &=\sum_{p}g_{mp} R^{p}_{jkl} \notag\\
    &=\frac{1}{2}(g_{jl,km}+g_{km,jl}-g_{jm,kl}-g_{kl,jm})+ \sum_{p,h}g_{ph}(\Gamma_{jl}^p\Gamma_{km}^h-\Gamma_{jm}^p\Gamma_{kl}^h).
\end{align}

If $P\subset T_x M$ is a 2-plane, then the sectional curvature of $P$ is defined by
\begin{align*}
    K(P):= R(e_1,e_2,e_2,e_1),
\end{align*}
where $\{e_1,e_2\}$ is an orthonormal basis of $P$. This definition is independent of the choice of such a basis. A Riemannian manifold $M$ has constant sectional curvature if the sectional curvature of every 2-plane is the same. That is, there exists $K_0\in\mathbb{R}$ such that for every $x\in M$ and 2-plane $P\subset T_x M$, $K(P)=K_0$.

The the Ricci tensor is given by
\begin{align}\label{ricci}
    \operatorname{Ric}\Big(\frac{\partial}{\partial x_j},\frac{\partial}{\partial x_k}\Big)=R_{jk} = \sum_{l,m} g^{lm} R_{jlmk}.
\end{align}
The covariant derivative $\nabla_{\frac{\partial}{\partial x_l}} R_{jk}$ of the Ricci tensor is defined by
\begin{align}\label{covariant}
    \nabla_{\frac{\partial}{\partial x_l}} R_{jk}
    &:=(\nabla\operatorname{Ric})\Big(\frac{\partial}{\partial x_l},\frac{\partial}{\partial x_j},\frac{\partial}{\partial x_k}\Big)
    =(\nabla_{\frac{\partial}{\partial x_l}}\operatorname{Ric})\Big(\frac{\partial}{\partial x_j},\frac{\partial}{\partial x_k}\Big) \notag\\
    &= \frac{\partial R_{jk}}{\partial x_l} - \sum_{m}\Gamma^{m}_{lj}R_{mk} - \sum_{m}\Gamma^{m}_{kl}R_{jm}.
\end{align}
The scalar curvature is the trace of the Ricci tensor, that is,
\begin{align}\label{scalar}
    R = \sum_{j,k}g^{jk} R_{jk}.
\end{align}

\subsection{Boundary normal coordinates}

Next, we briefly introduce the construction of geodesic coordinates with respect to the boundary (see \cite{LeeUhlm89} or \cite[p.\,532]{Taylor11.2}). 

Let $( M,g)$ be a smooth compact Riemannian manifold of dimension $n$ with smooth boundary $\partial  M$. For each boundary point $x^{\prime} \in \partial  M$, let $\gamma_{x^{\prime}}:[0,\varepsilon)\to \bar{ M}$ denote the unit-speed geodesic starting at $x^{\prime}$ and normal to the boundary $\partial  M$. If $ x^{\prime} := \{x_{1}, \ldots, x_{n-1}\}$ are any local coordinates for the boundary $\partial  M$ near $x_0 \in \partial  M$, we can extend them smoothly to functions on a neighborhood of $x_0$ in $\bar{ M}$ by letting them be constant along each normal geodesic $\gamma_{x^{\prime}}$. If we then define $x_n$ to be the parameter along each $\gamma_{x^{\prime}}$, it follows easily that $\{x_{1}, \ldots, x_{n}\}$ form coordinates for $\bar{ M}$ in some neighborhood of $x_0$, which we call the boundary normal coordinates determined by $\{x_{1}, \ldots, x_{n-1}\}$. In these coordinates, $x_n>0$ in $ M$, and the boundary $\partial  M$ is locally characterized by $x_n=0$. A standard computation shows that the metric $g$ then has the form 
\begin{align*}
    g =\sum_{\alpha,\beta} g_{\alpha\beta} \,dx_{\alpha} \,dx_{\beta} + dx_{n}^{2}.
\end{align*}
This implies that $g_{n\alpha}=g_{\alpha n}=0$, $1 \leqslant \alpha \leqslant n-1$, and $g_{nn}=1$ in a neighborhood of $x_0$ in $ M$. Therefore, in boundary normal coordinates, we have $g_{jk}(x_{0}) = g^{jk}(x_{0}) = \delta_{jk}$ and $g_{jk,\alpha}(x_{0}) = g^{jk,\alpha}(x_{0}) = 0$.

The second fundamental form $h$ is defined by
\begin{align*}
    h(X,Y):= g(\nabla_X \nu,Y),
\end{align*} 
where $\nu$ is the outward unit normal vector to the boundary $\partial  M$. Applying \eqref{9.19}, \eqref{chr1} and $g_{n\alpha}=0$ in a neighborhood of $x_0$ in $ M$, the $(\alpha,\beta)$ entry of the second fundamental form $h$ is given by
\begin{align}\label{6.1}
    h_{\alpha\beta}
    &:= g\Big( \nabla_{\frac{\partial }{\partial x_{\alpha}}} \nu, \frac{\partial }{\partial x_{\beta}} \Big)
    = g\Big( \nabla_{\frac{\partial }{\partial x_{\alpha}}} \Bigl(-\frac{\partial }{\partial x_{n}}\Bigr), \frac{\partial }{\partial x_{\beta}} \Big)\notag\\
    &=-\sum_{j}\Gamma_{n\alpha}^j g_{j\beta}
    = -\frac{1}{2}g_{\alpha\beta,n}.
\end{align}

We can choose a frame that diagonalizes the second fundamental form such that the $(\alpha,\beta)$ entry of $h$ at the point $x_{0}$ is
\begin{align*}
    h_{\alpha\beta}(x_0)=-\frac{1}{2}g_{\alpha\beta,n}(x_0)=\kappa_\alpha\delta_{\alpha\beta},
\end{align*}
where $\kappa_\alpha$, $1\leqslant \alpha \leqslant n-1$, are the principal curvatures at $x_0\in\partial  M$, and $\delta_{jk}$ is the Kronecker delta. The mean curvature of the boundary is defined by
\begin{align}\label{6.2}
    H:=\sum_\alpha \kappa_\alpha.
\end{align}
It is easy to show that $g_{jk,n}(x_{0}) = -g^{jk,n}(x_{0})$ since $\sum_k g_{jk}g^{kl}=\delta_{jl}$. Therefore, in boundary normal coordinates, for $1 \leqslant j,k \leqslant n$ and $1 \leqslant \alpha,\beta \leqslant n-1$, we conclude that
\begin{align}
    \label{6.3} g_{jk}(x_{0}) &= g^{jk}(x_{0}) = \delta_{jk}, \\
    \label{6.3.1} g_{jk,\alpha}(x_{0}) &= g^{jk,\alpha}(x_{0}) = 0,\\
    \label{6.3.2} g_{\alpha\beta,n}(x_{0}) &= -g^{\alpha\beta,n}(x_{0}) = -2\kappa_\alpha \delta_{\alpha\beta}.
\end{align}

\begin{lemma}\label{lem2.1}
    Let $\tilde{R}$ and $R$ be the scalar curvatures of $M$ and $\partial M$, respectively. Then at the origin $x_0$ in boundary normal coordinates,
    \begin{align}
        &\sum_\alpha g_{\alpha\alpha,nn}(x_0)=3\sum_\alpha\kappa_\alpha^2-H^2-\tilde{R}+R,\\
        &\sum_\alpha g^{\alpha\alpha,nn}(x_0)=5\sum_\alpha\kappa_\alpha^2+H^2+\tilde{R}-R.
    \end{align}
\end{lemma}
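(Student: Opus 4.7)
The plan is to expand the ambient scalar curvature $\tilde{R}$ at $x_0$ via formula \eqref{rm} in boundary normal coordinates, isolate the $g_{\alpha\alpha,nn}$ term that appears there, and then use the identity $\sum_k g_{jk}g^{kl}=\delta_{jl}$ differentiated twice in $x_n$ to pass from $\sum_\alpha g_{\alpha\alpha,nn}$ to $\sum_\alpha g^{\alpha\alpha,nn}$.

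For the first identity, I would start from $\tilde{R}(x_0)=\sum_{j,l}\tilde{R}_{jllj}(x_0)$ (using \eqref{ricci}, \eqref{scalar}, and $g^{jk}(x_0)=\delta_{jk}$) and split the sum into the purely tangential part $\sum_{\alpha,\beta}\tilde{R}_{\alpha\beta\beta\alpha}$ and the mixed part $2\sum_\alpha \tilde{R}_{\alpha n n\alpha}$. Using \eqref{rm} together with \eqref{6.3}--\eqref{6.3.2}, a direct computation of the Christoffel symbols at $x_0$ yields the only nonzero ones: $\Gamma^n_{\alpha\alpha}(x_0)=\kappa_\alpha$ and $\Gamma^\alpha_{n\alpha}(x_0)=-\kappa_\alpha$. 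Plugging these into \eqref{rm} gives
\begin{align*}
\tilde{R}_{\alpha n n\alpha}(x_0)=-\tfrac{1}{2}g_{\alpha\alpha,nn}(x_0)+\kappa_\alpha^2,
\end{align*}
while for $\alpha\neq\beta$ only purely tangential second derivatives of $g$ survive in the first parenthesis of \eqref{rm}, so that $\tilde{R}_{\alpha\beta\beta\alpha}(x_0)=R_{\alpha\beta\beta\alpha}(x_0)-\kappa_\alpha\kappa_\beta$ (the intrinsic Christoffel symbols of $\partial M$ at $x_0$ vanish since $g_{\alpha\beta,\gamma}(x_0)=0$). Summing and using $\sum_{\alpha\neq\beta}\kappa_\alpha\kappa_\beta=H^2-\sum_\alpha\kappa_\alpha^2$ gives
\begin{align*}
\tilde{R}(x_0)=R(x_0)+3\sum_\alpha\kappa_\alpha^2-H^2-\sum_\alpha g_{\alpha\alpha,nn}(x_0),
\end{align*}
which solves to the first claim.

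For the second identity, I would differentiate $\sum_k g_{\alpha k}g^{k\beta}=\delta_{\alpha\beta}$ twice in $x_n$ and evaluate at $x_0$; since $g_{jk,n}(x_0)$ is nonzero only for tangential pairs with value $-2\kappa_\alpha\delta_{\alpha\beta}$, and similarly $g^{jk,n}(x_0)=2\kappa_\alpha\delta_{\alpha\beta}$, one obtains $g^{\alpha\alpha,nn}(x_0)=-g_{\alpha\alpha,nn}(x_0)+8\kappa_\alpha^2$. Summing over $\alpha$ and substituting the first identity yields the second one.

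The only mildly delicate step is identifying the intrinsic scalar curvature $R(x_0)$ of $\partial M$ with $\sum_{\alpha\neq\beta}R_{\alpha\beta\beta\alpha}(x_0)$ using purely tangential second derivatives of $g_{\alpha\beta}$; this works because in boundary normal coordinates the restriction $g_{\alpha\beta}|_{x_n=0}$ is exactly the induced metric, and the vanishing \eqref{6.3.1} makes all intrinsic Christoffel contributions drop out at $x_0$, leaving only the quadratic form in tangential second derivatives. Everything else is bookkeeping with the Christoffel symbols $\Gamma^n_{\alpha\alpha}(x_0)=\kappa_\alpha$, $\Gamma^\alpha_{n\alpha}(x_0)=-\kappa_\alpha$ in \eqref{rm}.
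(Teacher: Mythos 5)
Your proof is correct and follows essentially the same route as the paper: compute $\tilde R_{\alpha n n\alpha}(x_0)$ from the local formula \eqref{rm} (equivalently, $\tilde R_{\alpha n\alpha n}$), combine this with the Gauss-type relation between $\tilde R_{\alpha\beta\beta\alpha}$ and $R_{\alpha\beta\beta\alpha}$, and then pass to $g^{\alpha\alpha,nn}$ by differentiating $\sum_k g_{\alpha k}g^{k\beta}=\delta_{\alpha\beta}$ twice in $x_n$. The only variation is that you re-derive the trace of the Gauss equation directly from \eqref{rm} and the vanishing of the tangential Christoffel symbols at $x_0$, whereas the paper cites the Gauss equation and organizes the algebra through the auxiliary quantity $\tilde R_{nn}$; these are equivalent bookkeeping choices.
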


\begin{proof}
    It follows from \eqref{chris}, \eqref{rm}, and \eqref{6.3.2} that, at the origin $x_0$ in boundary normal coordinates,
    \begin{align*}
        &\tilde{R}_{\alpha n\alpha n}(x_0)=\frac{1}{2}g_{\alpha\alpha,nn}-\sum_\beta (\Gamma_{\alpha n}^\beta)^2,\\
        &\Gamma_{\alpha n}^\beta(x_0)=-\kappa_\alpha\delta_{\alpha\beta}.
    \end{align*}
    Thus,
    \begin{align}\label{c1}
        \sum_\alpha g_{\alpha\alpha,nn}(x_0)=2\sum_\alpha\kappa_\alpha^2-2\tilde{R}_{nn}.
    \end{align}
    According to the Gauss equation (see \cite[p.\,39--40]{ChowLL06})
    \begin{align*}
        R_{\alpha\beta\gamma\rho} 
        = \tilde{R}_{\alpha\beta\gamma\rho} + h_{\alpha\rho}h_{\beta\gamma} - h_{\alpha\gamma}h_{\beta\rho}.
    \end{align*}
    Using \eqref{ricci}, \eqref{scalar}, and \eqref{6.2} in boundary normal coordinates, we get
    \begin{align*}
        R_{\alpha\rho}(x_0)
        &=\sum_\beta R_{\alpha\beta\beta\rho}
        =\sum_\beta (\tilde{R}_{\alpha\beta\beta\rho} + h_{\alpha\rho}h_{\beta\beta} - h_{\alpha\beta}h_{\beta\rho}) \notag\\
        &= \tilde{R}_{\alpha\rho} - \tilde{R}_{n \alpha\rho n} + Hh_{\alpha\rho} - \sum_{\beta} h_{\alpha\beta}h_{\beta\rho}\notag\\
        &= \tilde{R}_{\alpha\rho} - \tilde{R}_{n \alpha\rho n} + H\kappa_\alpha\delta_{\alpha\rho} -  \kappa_\alpha\kappa_\rho \delta_{\alpha\rho}.
    \end{align*}
    Hence,
    \begin{align}
        \label{4.1} &R_{\alpha\alpha}(x_0)
        =\tilde{R}_{\alpha\alpha} - \tilde{R}_{n \alpha\alpha n} + H\kappa_\alpha - \kappa_\alpha^2,\\
        \label{4.2} &R(x_0)
        =\sum_\alpha R_{\alpha\alpha}
        = \tilde{R} - 2\tilde{R}_{nn} + H^2 - \sum_\alpha \kappa_\alpha^2.
    \end{align}
    Since $\sum_k g_{\alpha k}g^{k\beta}=\delta_{\alpha\beta}$, we have
    \begin{align*}
        0=\frac{\partial^2}{\partial x_n^2}\Big(\sum_k g_{\alpha k}g^{k\alpha}\Big)
        =\sum_k\Big(g_{\alpha k,nn}g^{k\alpha}+2g_{\alpha k,n}g^{k\alpha,n}+g_{\alpha k}g^{k\alpha,nn}\Big).
    \end{align*}
    Thus,
    \begin{align*}
        &\sum_\alpha g_{\alpha\alpha,nn}(x_0)+\sum_\alpha g^{\alpha\alpha,nn}(x_0)=8\sum_\alpha\kappa_\alpha^2.
    \end{align*}
    Therefore, the desired results follows from the above equality and \eqref{c1}, \eqref{4.2}.
\end{proof}

\subsection{Pseudodifferential operators and symbols}

Finally, we recall some concepts of pseudodifferential operators and their symbols (cf. \cite[Chap.\,7]{Taylor11.2}). 

Assuming $U \subset \mathbb{R}^n$ and $m \in \mathbb{R}$, we define $S^{m}_{1,0} = S^{m}_{1,0}(U,\mathbb{R}^n)$ to consist of $C^{\infty}$-functions $p(x,\xi)$ satisfying for every compact set $Q \subset U$,
\begin{align*}
    |D_{x}^{\beta}D_{\xi}^{\alpha}p(x,\xi)| 
    \leqslant C_{Q,\alpha,\beta} \langle \xi \rangle ^{m-|\alpha|}, \quad x \in Q,\ \xi \in \mathbb{R}^n
\end{align*}
for all $\alpha,\beta \in \mathbb{N}^{n}$, where $D^{\alpha} = D^{\alpha_1} \cdots D^{\alpha_n}$, $D_{j} = -i \frac{\partial }{\partial x_j}$ and $\langle \xi \rangle = (1 + |\xi|^2)^{1/2}$. The elements of $S^{m}_{1,0}$ are called symbols of order $m$. It is clear that $S^{m}_{1,0}$ is a Fr\'{e}chet space with semi-norms given by
\begin{align*}
    \|p\|_{Q,\alpha,\beta} 
    := \sup_{x \in Q}
    \big| \big( D_{x}^{\beta}D_{\xi}^{\alpha}p(x,\xi) \big) (1 + |\xi|)^{-m+|\alpha|} \big|.
\end{align*}

Let $p(x,\xi) \in S^{m}_{1,0}$ and $\hat{u}(\xi) = \int_{\mathbb{R}^n}e^{-iy \cdot \xi} u(y) \,dy$ be the Fourier transform of $u$. A pseudodifferential operator in an open set $U$ is essentially defined by a Fourier integral operator
\begin{align*}
    P(x,D)u(x) 
    = \frac{1}{(2\pi)^n}\int_{\mathbb{R}^n}p(x,\xi)e^{ix \cdot \xi} \hat{u}(\xi) \,d\xi
\end{align*}
for $u \in C^{\infty}_{0}(U)$. In such a case, we say the associated operator $P(x,D)$ belongs to $OPS^{m}$. We denote $OPS^{-\infty}=\bigcap_m OPS^m$. If there are smooth $p_{m-j}(x,\xi)$, homogeneous in $\xi$ of degree $m-j$ for $|\xi| \geqslant 1$, that is, $p_{m-j}(x,r\xi) = r^{m-j}p_{m-j}(x,\xi)$ for $r>0$, and if
\begin{align}\label{symbol}
    p(x,\xi) \sim \sum_{j \geqslant 0}p_{m-j}(x,\xi)
\end{align}
in the sense that
\begin{align*}
    p(x,\xi) - \sum_{j = 0}^{N} p_{m-j}(x,\xi) \in S_{1,0}^{m-N-1}
\end{align*}
for all $N$, then we say $p(x,\xi) \in S_{cl}^{m}$, or just $p(x,\xi) \in S^{m}$. We denote $S^{-\infty}=\bigcap_m S^m$. We call $p_{m}(x,\xi)$ the principal symbol of $P(x,D)$. We say $P(x,D) \in OPS^{m}$ is elliptic of order $m$ if on each compact $Q \subset U$ there are constants $C_{Q}$ and $r < \infty$ such that
\begin{align*}
    |p(x,\xi)^{-1}| \leqslant C_{Q} \langle \xi \rangle ^{-m}, \quad |\xi| \geqslant r.
\end{align*}

We can now define a pseudodifferential operator on a manifold $ M$. In particular,
\begin{align*}
    P:C_{0}^{\infty}( M) \to C^{\infty}( M)
\end{align*}
belongs to $OPS^{m}_{1,0}( M)$ if the kernel of $P$ is smooth off the diagonal in $ M \times  M$ and if for any coordinate neighborhood $U \subset  M$ with $\Phi:U \to \mathcal{O}$ a diffeomorphism onto an open subset $\mathcal{O} \subset \mathbb{R}^{n}$, the map $\tilde{P}:C_{0}^{\infty}(\mathcal{O}) \to C^{\infty}(\mathcal{O})$ given by
\begin{align*}
    \tilde{P}u := P(u \circ \Phi) \circ \Phi^{-1}
\end{align*}
belongs to $OPS^{m}_{1,0}(\mathcal{O})$. We refer the reader to \cite{Grubb86,Hormander85.3,Taylor11.2,Taylor81} for more details.

\section{Symbol calculus and the asymptotic expansion of heat trace}\label{s3}

In this section, we will derive the factorization of the operator $\Delta_{\phi}+V$ and calculate the full symbols of the Dirichlet-to-Neumann map $\Lambda$ and the pseudodifferential operator $(\Lambda-\tau)^{-1}$ which is a pseudodifferential operator of order $-1$ in $\xi^{\prime}$ with parameter $\tau$ (see \cite[Chap.\,II, Sect.\,9]{Shubin01}, \cite[Chap.\,1, Sect.\,1.6]{Gilkey95} or \cite[p.\,16]{Taylor11.2}).

\subsection{Symbols of pseudodifferential operators}

In boundary normal coordinates, we can write the Laplace--Beltrami operator as
\begin{align*}
    \Delta
    &= \frac{\partial^2 }{\partial x_n^2} + \frac{1}{2} \sum_{\alpha,\beta}  g^{\alpha\beta}g_{\alpha\beta,n} \frac{\partial }{\partial x_n} + \sum_{\alpha,\beta}  g^{\alpha\beta} \frac{\partial^2}{\partial x_{\alpha} \partial x_{\beta}} + \sum_{\alpha,\beta}  \biggl(\frac{1}{2} g^{\alpha\beta} \sum_{\gamma,\rho}  g^{\gamma\rho} g_{\gamma\rho,\alpha} + g^{\alpha\beta,\alpha}\biggr)\frac{\partial}{\partial x_\beta}.
\end{align*}
It follows from this and \eqref{1.1}, \eqref{1.1.1} that
\begin{align}\label{2.2}
    \Delta_{\phi}+V
    = \frac{\partial^2 }{\partial x_n^2} + B \frac{\partial }{\partial x_n} + C,
\end{align}
where $C = C_2 + C_1 + C_0$ and
\begin{align*}
    & B = \frac{1}{2}\sum_{\alpha,\beta} g^{\alpha\beta} g_{\alpha\beta,n} - \phi_n, \\
    & C_2 = \sum_{\alpha,\beta}g^{\alpha\beta} \frac{\partial^2}{\partial x_{\alpha} \partial x_{\beta}}, \\
    & C_1 = \sum_{\alpha,\beta}  \bigg(\frac{1}{2} g^{\alpha\beta} \sum_{\gamma,\rho}  g^{\gamma\rho} g_{\gamma\rho,\alpha} + g^{\alpha\beta,\alpha}\bigg)\frac{\partial}{\partial x_\beta} -\sum_{\alpha} \phi_\alpha \frac{\partial}{\partial x_\alpha}, \\
    & C_0 = V.
\end{align*}

We then derive the microlocal factorization of the operator $\Delta_{\phi}+V$.
\begin{proposition}\label{prop2.1}
    There exists a pseudodifferential operator $W(x,\partial_{x^\prime})$ of order one in $x^\prime$ depending smoothly on $x_n$ such that
    \begin{align*}
        \Delta_{\phi}+V = \Big(\frac{\partial }{\partial x_n} + B - W\Big)\Big(\frac{\partial }{\partial x_n} + W\Big)
    \end{align*}
    modulo a smoothing operator.
\end{proposition}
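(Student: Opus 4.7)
The plan is to reduce the factorization to a symbolic Riccati equation and solve it recursively by exploiting the ellipticity of the leading symbol. I would first expand the right-hand side on a test function $u$. Using $\partial_{x_n}(Wu) = (\partial_{x_n}W)u + W(\partial_{x_n}u)$, one obtains
\[
\Big(\frac{\partial}{\partial x_n} + B - W\Big)\Big(\frac{\partial}{\partial x_n} + W\Big) u = \frac{\partial^2 u}{\partial x_n^2} + B\frac{\partial u}{\partial x_n} + \Big(\frac{\partial W}{\partial x_n} + BW - W^2\Big) u.
\]
Comparing with \eqref{2.2}, the proposition reduces to constructing a tangential pseudodifferential operator $W = W(x,\partial_{x'})$, smooth in $x_n$, for which the operator identity $W^2 - BW - \partial_{x_n} W + C = 0$ holds modulo a smoothing operator on the boundary.

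Next, I would pass to full symbols. Writing $w(x,\xi') \sim \sum_{j \leqslant 1} w_j(x,\xi')$ with $w_j$ positively homogeneous of degree $j$ in $\xi'$ for $|\xi'| \geqslant 1$, and applying the standard composition formula to $W \circ W$, the operator identity translates into the symbolic Riccati equation
\[
\sum_{J} \frac{(-i)^{|J|}}{J!}\, \partial_{\xi'}^J w \cdot \partial_{x'}^J w - b\, w - \frac{\partial w}{\partial x_n} + c = 0,
\]
where $b$ and $c$ denote the full symbols of $B$ and $C$. Separating terms by homogeneity in $\xi'$, the top-order equation collapses to $w_1^2 = \sum_{\alpha,\beta} g^{\alpha\beta} \xi_\alpha \xi_\beta$; since the induced tangential metric is positive definite, the positive square root $w_1 = \bigl(\sum_{\alpha,\beta} g^{\alpha\beta} \xi_\alpha \xi_\beta\bigr)^{1/2}$ provides the unique elliptic solution, and is the only choice consistent with $W$ microlocally playing the role of $\Lambda$.

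With $w_1$ in hand, I would determine $w_{1-k}$ for $k \geqslant 1$ by induction on $k$. Collecting the terms of homogeneity degree $1-k$ in the symbol equation, the unknown $w_{1-k}$ enters at leading order only through the linear combination $2 w_1 w_{1-k}$, while all remaining contributions form an explicit polynomial expression in the previously determined $w_1, \ldots, w_{2-k}$, their $x$- and $\xi'$-derivatives, and the symbols $b$ and $c$. Ellipticity of $w_1$ on $\{\xi' \neq 0\}$ lets me divide and recover $w_{1-k}$ uniquely. A standard Borel summation then produces a symbol $w \in S_{\mathrm{cl}}^{1}$ with $w \sim \sum_{j \leqslant 1} w_j$, and the associated pseudodifferential operator $W$ realizes the factorization modulo $OPS^{-\infty}$. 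The point demanding care is that the recursion be well-posed and yield symbols depending smoothly on $x_n$; this is ensured by the positivity of $w_1$ together with the smooth $x_n$-dependence of $b$ and $c$, so no genuine obstruction arises.
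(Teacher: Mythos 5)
Your proposal is correct and follows essentially the same route as the paper: reduce the factorization to the operator Riccati equation $W^2 - BW - \partial_{x_n}W + C = 0$, pass to the full symbol composition formula, solve by homogeneity with $w_1 = |\xi'|$ from the principal part, and recover each lower-order $w_{1-k}$ recursively by division by $2w_1$. The paper additionally writes out $w_0$, $w_{-1}$, $w_{-2}$ and the general recursion explicitly because those formulas are used downstream in computing the heat-trace coefficients, but the underlying argument is the one you give.
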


\begin{proof}
    It follows from \eqref{2.2} that
    \begin{align*}
        \frac{\partial^2 }{\partial x_n^2} + B \frac{\partial }{\partial x_n} + C 
        = \Big(\frac{\partial }{\partial x_n} + B - W \Big) \Big(\frac{\partial }{\partial x_n} + W \Big),
    \end{align*}
    that is,
    \begin{align}\label{2.4}
        W^2 - BW - \Bigl(\frac{\partial }{\partial x_n}  W - W \frac{\partial }{\partial x_n}\Bigr) + C = 0.
    \end{align}
    Note that for any smooth function $v$,
    \begin{align*}
        \Bigl(\frac{\partial }{\partial x_n}  W - W \frac{\partial }{\partial x_n}\Bigr)v
        &=\frac{\partial }{\partial x_n}(Wv)-W\frac{\partial v}{\partial x_n}
        =\frac{\partial W}{\partial x_n} v + W\frac{\partial v}{\partial x_n} - W\frac{\partial v}{\partial x_n}\\
        &= \frac{\partial W}{\partial x_n} v.
    \end{align*}

    Recall that if $F$ and $G$ are two pseudodifferential operators with full symbols $f(x,\xi)$ and $g(x,\xi)$ in local coordinates $(x,\xi)$, respectively, then the full symbol $\sigma(FG)(x,\xi)$ of $FG$ is given by (see \cite[p.\,11]{Taylor11.2} or \cite[p.\,71]{Hormander85.3} and also \cite{Grubb86,Treves80})
    \begin{align*}
        \sigma(FG)(x,\xi) \sim \sum_{J} \frac{(-i)^{|J|}}{J !} \partial_{\xi}^{J} f(x,\xi) \, \partial_{x}^{J} g(x,\xi),
    \end{align*}
    where the sum is over all multi-indices $J$. 

    Let $w = w(x,\xi^{\prime})$ be the full symbol of $W$, we write
    \begin{align*}
        w(x,\xi^{\prime}) \sim \sum_{j\leqslant 1} w_j(x,\xi^{\prime})
    \end{align*}
    with $w_j=w_j(x,\xi^{\prime})$ homogeneous of degree $j$ in $\xi^{\prime}$. Let $b(x)$ and $c(x,\xi^{\prime}) = c_2(x,\xi^{\prime}) + c_1(x,\xi^{\prime}) + c_0(x)$ be the full symbols of $B$ and $C$, respectively. We write
    \begin{align}\label{10.1}
        \xi^{\alpha}=\sum_{\beta}g^{\alpha\beta}\xi_{\beta},\quad |\xi^{\prime}|^2=\sum_{\alpha}\xi^{\alpha}\xi_{\alpha}=\sum_{\alpha,\beta}g^{\alpha\beta}\xi_\alpha\xi_\beta.
    \end{align}
    Then,
    \begin{align}
        & b(x) = \frac{1}{2}\sum_{\alpha,\beta} g^{\alpha\beta} g_{\alpha\beta,n} -\phi_n,\label{6.6}\\
        & c_2(x,\xi^{\prime}) = -|\xi^{\prime}|^2,\label{6.7}\\
        & c_1(x,\xi^{\prime}) = i\sum_{\alpha}
        \bigg(
            \frac{1}{2}\xi^{\alpha} \sum_{\gamma,\rho} g^{\gamma\rho} g_{\gamma\rho,\alpha} + \frac{\partial \xi^{\alpha}}{\partial x_\alpha}
        \bigg)
        - i \sum_{\alpha}\phi_\alpha \xi_\alpha, \label{6.8}\\
        & c_0(x) = V.\label{6.9}
    \end{align}

    Noting that $\partial_{\xi^{\prime}}^{J} b(x) = 0$ for all $|J|>0$, we conclude that the full symbol equation of \eqref{2.4} is
    \begin{equation}\label{2.5}
        \sum_{J} \frac{(-i)^{|J|}}{J !} \partial_{\xi^{\prime}}^{J} w \, \partial_{x^\prime}^{J} w - bw - \frac{\partial w}{\partial x_n}  + c = 0,
    \end{equation}
    where $b=b(x)$ and $c=c(x,\xi^\prime)$. Note that the partial derivatives with respect to $x_n$ and $\xi_n$ do not appear in $\partial_{\xi^{\prime}}^{J} w \, \partial_{x^\prime}^{J} w$ in the above equation since $w(x,\xi^{\prime})$ does not depend on $\xi_n$ by construction.

    We shall determine $w_j=w_j(x,\xi^{\prime})$, $j\leqslant 1$, recursively so that \eqref{2.5} holds modulo $S^{-\infty}$. Grouping the homogeneous terms of degree two in \eqref{2.5}, which implies
    \begin{align*}
        w_1^2 + c_2 = 0,
    \end{align*}
    so we can choose
    \begin{align}\label{2.6}
        w_1 = |\xi^{\prime}|.
    \end{align}
    The terms of degree one in \eqref{2.5} are
    \begin{align*}
        2w_1w_0 - i\sum_\alpha \frac{\partial w_1}{\partial \xi_\alpha}\frac{\partial w_1}{\partial x_\alpha} - bw_1 - \frac{\partial w_1}{\partial x_n} + c_1 = 0
    \end{align*}
    so that
    \begin{align}\label{2.7}
        w_0 
        = \frac{1}{2} w_1^{-1} 
        \biggl(
            i\sum_\alpha\frac{\partial w_1}{\partial \xi_\alpha}\frac{\partial w_1}{\partial x_\alpha} + bw_1 + \frac{\partial w_1}{\partial x_n} - c_1
        \biggr).
    \end{align}
    The terms of degree zero in \eqref{2.5} are
    \begin{align*}
        &2w_1w_{-1} + w_0^2 - i\sum_\alpha
        \Big(\frac{\partial w_1}{\partial \xi_\alpha}\frac{\partial w_0}{\partial x_\alpha} + \frac{\partial w_0}{\partial \xi_\alpha}\frac{\partial w_1}{\partial x_\alpha}\Big)
        - \frac{1}{2}\sum_{\alpha,\beta} \frac{\partial^2 w_1}{\partial \xi_{\alpha} \partial \xi_{\beta}}\frac{\partial^2 w_1}{\partial x_{\alpha} \partial x_{\beta}}\\
        &- bw_0 - \frac{\partial w_0}{\partial x_n} + c_0 = 0,
    \end{align*}
    i.e.,
    \begin{align}\label{2.8}
        w_{-1} 
        & = \frac{1}{2} w_1^{-1} 
        \bigg[
            -w_0^2 + i\sum_\alpha \Big(\frac{\partial w_1}{\partial \xi_\alpha}\frac{\partial w_0}{\partial x_\alpha} + \frac{\partial w_0}{\partial \xi_\alpha}\frac{\partial w_1}{\partial x_\alpha}\Big)+ \frac{1}{2}\sum_{\alpha,\beta} \frac{\partial^2 w_1}{\partial \xi_{\alpha} \partial \xi_{\beta}}\frac{\partial^2 w_1}{\partial x_{\alpha} \partial x_{\beta}}  \notag\\
            & \quad + bw_0 + \frac{\partial w_0}{\partial x_n} - c_0 
        \bigg].
    \end{align}
    The terms of degree $-1$ in \eqref{2.5} are
    \begin{align*}
        & 2w_1w_{-2} + 2w_0w_{-1} - i\sum_\alpha \Big(\frac{\partial w_1}{\partial \xi_\alpha}\frac{\partial w_{-1}}{\partial x_\alpha}+\frac{\partial w_0}{\partial \xi_\alpha}\frac{\partial w_0}{\partial x_\alpha} +\frac{\partial w_{-1}}{\partial \xi_\alpha}\frac{\partial w_1}{\partial x_\alpha}\Big) \\
        & - \frac{1}{2} \sum_{\alpha,\beta} \Big(\frac{\partial^2 w_1}{\partial \xi_{\alpha} \partial \xi_{\beta}} \frac{\partial^2 w_0}{\partial x_{\alpha} \partial x_{\beta}} + \frac{\partial^2 w_0}{\partial \xi_{\alpha} \partial \xi_{\beta}} \frac{\partial^2 w_1}{\partial x_{\alpha} \partial x_{\beta}}\Big) + \frac{i}{6}\sum_{\alpha,\beta,\gamma} \frac{\partial^3 w_1}{\partial \xi_{\alpha} \partial \xi_{\beta} \partial \xi_{\gamma}}\frac{\partial^3 w_1}{\partial x_{\alpha} \partial x_{\beta} \partial x_{\gamma}} \\
        & - bw_{-1} - \frac{\partial w_{-1}}{\partial x_n} = 0, 
    \end{align*}
    and we obtain
    \begin{align}\label{2.9}
        w_{-2} 
        & = \frac{1}{2} w_1^{-1} 
        \bigg[
            -2w_0w_{-1} + i\sum_\alpha \Big(\frac{\partial w_1}{\partial \xi_\alpha}\frac{\partial w_{-1}}{\partial x_\alpha}+\frac{\partial w_0}{\partial \xi_\alpha}\frac{\partial w_0}{\partial x_\alpha} +\frac{\partial w_{-1}}{\partial \xi_\alpha}\frac{\partial w_1}{\partial x_\alpha}\Big) \notag\\
            & \quad   + \frac{1}{2} \sum_{\alpha,\beta} \Big(\frac{\partial^2 w_1}{\partial \xi_{\alpha} \partial \xi_{\beta}} \frac{\partial^2 w_0}{\partial x_{\alpha} \partial x_{\beta}} + \frac{\partial^2 w_0}{\partial \xi_{\alpha} \partial \xi_{\beta}} \frac{\partial^2 w_1}{\partial x_{\alpha} \partial x_{\beta}}\Big) \notag\\
            & \quad - \frac{i}{6}\sum_{\alpha,\beta,\gamma} \frac{\partial^3 w_1}{\partial \xi_{\alpha} \partial \xi_{\beta} \partial \xi_{\gamma}}\frac{\partial^3 w_1}{\partial x_{\alpha} \partial x_{\beta} \partial x_{\gamma}} + bw_{-1} + \frac{\partial w_{-1}}{\partial x_n}
        \bigg].
    \end{align}
    Proceeding recursively, for the terms of degree $-m\ (m \geqslant 2)$, we have
    \begin{align*}
        2w_1w_{-1-m} + \sum_{\substack{-m \leqslant j,k \leqslant 1 \\ |J| = j + k + m}} \frac{(-i)^{|J|}}{J!} \partial_{\xi^{\prime}}^{J} w_j \, \partial_{x^\prime}^{J} w_k - bw_{-m} - \frac{\partial w_{-m}}{\partial x_n} = 0,
    \end{align*}
    namely,
    \begin{align}\label{2.9.1}
        w_{-1-m} = -\frac{1}{2} w_1^{-1} 
        \left(
            \sum_{\substack{-m \leqslant j,k \leqslant 1 \\ |J| = j + k + m}} \frac{(-i)^{|J|}}{J !} \partial_{\xi^{\prime}}^{J} w_j \, \partial_{x^\prime}^{J} w_k - bw_{-m} - \frac{\partial w_{-m}}{\partial x_n}
        \right)
    \end{align}
    for $m \geqslant 2$.
\end{proof}

From Proposition \ref{prop2.1}, we get the full symbol $w(x,\xi^{\prime}) \sim \sum_{j\leqslant 1} w_j(x,\xi^{\prime})$ of the pseudodifferential operator $W$. This implies that $W$ is obtained on the boundary $\partial  M$ modulo a smoothing operator.

\begin{proposition}
    In boundary normal coordinates, the Dirichlet-to-Neumann map $\Lambda$ can be represented as
    \begin{align}\label{2.10}
        \Lambda(f) = Wu|_{\partial M}
    \end{align}
    modulo a smoothing operator, where $u$ solves the corresponding Dirichlet problem \eqref{1.4}.
\end{proposition}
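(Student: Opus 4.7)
The plan is to apply the factorization of Proposition \ref{prop2.1} directly to the Dirichlet solution $u$. Setting $v := \bigl(\frac{\partial}{\partial x_n} + W\bigr)u$, the factorization gives
\begin{equation*}
    \Bigl(\frac{\partial}{\partial x_n} + B - W\Bigr) v \equiv (\Delta_{\phi}+V)u = 0 \pmod{\text{smoothing}}
\end{equation*}
in a collar neighborhood $\{0 \leqslant x_n < \varepsilon\}$ of $\partial M$. The proposition will follow once we show that the trace $v|_{\partial M}$ is a smoothing operator applied to $f$, because then
\begin{equation*}
    -\frac{\partial u}{\partial x_n}\Big|_{\partial M} \equiv W u|_{\partial M} \pmod{\text{smoothing}},
\end{equation*}
and in boundary normal coordinates the outward unit normal satisfies $\nu = -\partial/\partial x_n$ on $\partial M$, so $\Lambda(f) = \partial u/\partial \nu|_{\partial M} = -\partial u/\partial x_n|_{\partial M}$.

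To show $v|_{\partial M} \equiv 0$ modulo smoothing, the cleanest route is to build an approximate Poisson operator. Since $w_1 = |\xi^{\prime}|$ is strictly positive for $\xi^{\prime}\neq 0$, the symbol of $W$ constructed in Proposition \ref{prop2.1} has positive-definite principal part, and one can build a pseudodifferential operator $P = P(x_n)$ on $\{0 \leqslant x_n < \varepsilon\}$, acting on functions on $\partial M$, whose symbol is recursively determined from $w$ and which satisfies $\bigl(\frac{\partial}{\partial x_n}+W\bigr) P \equiv 0$ modulo a smoothing operator and $P|_{x_n=0} = I$. Cutting off in $x_n$ and using the factorization, $\tilde{u} := \chi(x_n) P f$ then satisfies $(\Delta_{\phi}+V)\tilde{u} \in C^{\infty}(M)$ and $\tilde{u}|_{\partial M} = f$. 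Because $0$ is not a Dirichlet eigenvalue of $\Delta_{\phi}+V$, the difference $u - \tilde{u}$ is the Dirichlet solution with smooth right-hand side and zero boundary data, hence is smooth up to $\partial M$ by standard elliptic regularity. Consequently $u \equiv Pf$ modulo a smoothing operator acting on $f$. Since $W$ is tangential (so commutes with the purely $x_n$-dependent cutoff), applying $\frac{\partial}{\partial x_n}+W$ and restricting to $x_n=0$ gives $v|_{\partial M} \equiv 0$ modulo smoothing, as required.

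The main obstacle in this plan is the construction and justification of the parametrix $P$: one must produce $P$ as a genuine pseudodifferential operator in $x^{\prime}$ depending smoothly on $x_n$, whose symbol is obtained order by order from the symbol of $W$ in Proposition \ref{prop2.1}, and one must verify that $\bigl(\frac{\partial}{\partial x_n}+W\bigr)P \in OPS^{-\infty}$. This is standard but delicate; it can be carried out either by the exponential-symbol calculus (taking $P$ with symbol heuristically $\sim e^{-x_n W}$, which is well-defined because the principal symbol of $-W$ is $-|\xi^{\prime}|<0$) or via the Calder\'on-type projector framework used by Taylor and Grubb, both already cited in the excerpt. Once $P$ is in hand, the three steps above combine to give $\Lambda(f) = W u|_{\partial M}$ modulo a smoothing operator, which is the assertion of the proposition.
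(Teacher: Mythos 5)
Your argument is correct, but it follows a genuinely different route from the paper's own proof. Both proofs start from the factorization in Proposition~\ref{prop2.1} and set $v=(\partial_{x_n}+W)u$; the divergence comes in how each establishes that $v|_{\partial M}$ is a smoothing operator applied to $f$. The paper, following Lee--Uhlmann, treats the first-order equation $(\partial_{x_n}+B-W)v=y$ (with $y$ smooth by the factorization error and interior regularity) as a backward generalized heat equation by substituting $t=T-x_n$: since $w_1=|\xi^{\prime}|>0$, the solution operator propagating $v$ from the interior slice $x_n=T$ (where $v$ is smooth) to $x_n=0$ is smoothing, so $v|_{\partial M}=\mathcal{S}f$ with $\mathcal{S}$ smoothing. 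You instead construct an approximate Poisson operator $P$ with $(\partial_{x_n}+W)P\in OPS^{-\infty}$ and $P|_{x_n=0}=I$, then invoke ellipticity of $\Delta_{\phi}+V$ (with $0$ not a Dirichlet eigenvalue) to identify $u$ with $\chi Pf$ modulo a smoothing operator, from which $v|_{\partial M}\equiv 0$ modulo smoothing follows directly. The two approaches use the positivity of $w_1$ in essentially dual ways (smoothing of the heat semigroup for $B-W$ versus existence and decay of the Poisson-type parametrix for $\partial_{x_n}+W$). Your Calder\'on-projector style argument is somewhat longer because it requires constructing and justifying $P$, but it makes the comparison with the exact solution $u$ explicit and avoids the time-reversal trick; the paper's route is more economical because it only needs the well-known smoothing property of the heat semigroup from Treves. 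As you yourself flag, the construction of $P$ and the verification that $(\Delta_{\phi}+V)\chi P$ is smoothing (including the commutator with the cutoff $\chi$, which is harmless since $P$ is already smoothing away from $x_n=0$) are the steps one must carry out carefully, but these are indeed standard; nothing is missing in outline.
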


\begin{proof}
    In the boundary normal coordinates $(x^\prime,x_n)$ with $x_n\in[0,T]$. Since the principal symbol of the operator $\Delta_{\phi}+V$ is negative, the hyperplane ${x_n = 0}$ is non-characteristic. Hence, $\Delta_{\phi}+V$ is partially hypoelliptic with respect to this boundary (see \cite[p.\,107]{Hormander64}). Therefore, the solution to the equation $(\Delta_{\phi}+V)u = 0$ is smooth in normal variable, that is, $u\in C^\infty([0,T];\mathfrak{D}^\prime (\mathbb{R}^{n-1}))$ locally. 
    
    From Proposition \ref{prop2.1}, we see that \eqref{1.4} is locally equivalent to the following system of equations for $u,v\in C^\infty([0,T];\mathfrak{D}^\prime(\mathbb{R}^{n-1}))$:
    \begin{align*}
        &\Big(\frac{\partial }{\partial x_n} + W\Big)u = v, \quad u|_{x_n=0}=f,\\
        &\Big(\frac{\partial }{\partial x_n} + B - W\Big)v = y \in C^\infty([0,T]\times \mathbb{R}^{n-1}).
    \end{align*}
    Inspired by \cite{LeeUhlm89}, if we substitute $t = T-x_n$ for the second equation above, then we get a backward generalized heat equation
    \begin{align*}
        \frac{\partial v}{\partial t} -(B-W)v=-y.
    \end{align*}
    Since $u$ is smooth in the interior of $ M$ by interior regularity for elliptic operator $\Delta_{\phi}+V$, it follows that $v$ is also smooth in the interior of $ M$, and so $v|_{x_n=T}$ is smooth. In view of the principal symbol $w_1$ of $W$ is positive (see \eqref{2.6}), we get that the solution operator for this heat equation is smooth for $t > 0$ (see \cite[p.\,134]{Treves80}). 
    
    Therefore,
    \begin{align*}
        \frac{\partial u}{\partial x_n} + Wu=v \in C^\infty([0,T]\times \mathbb{R}^{n-1})
    \end{align*}
    locally. If we set $\mathcal{S} f = v|_{\partial  M}$, then $\mathcal{S}$ is a smoothing operator and
    \begin{align*}
        \frac{\partial u}{\partial x_n}\Big|_{\partial  M}=-Wu|_{\partial  M}+\mathcal{S}f.
    \end{align*}
    Combining this and \eqref{1.5}, we obtain
    \begin{align*}
        \Lambda(f) = Wu|_{\partial  M}
    \end{align*}
    modulo a smoothing operator.
\end{proof}

Since the Dirichlet-to-Neumann map $\Lambda$ is defined on the boundary $\partial M$ and it is an elliptic, self-adjoint pseudodifferential operator of order one, it follows from \eqref{2.10} that the full symbol $\sigma(\Lambda)$ of $\Lambda$ has the form
\begin{equation}\label{2.11}
    \sigma(\Lambda)(x^{\prime},\xi^{\prime}) \sim \sum_{j \leqslant 1} w_j(x^{\prime},\xi^{\prime}).
\end{equation}
We apply the methods of Grubb \cite{Grubb86} and Seeley \cite{Seeley67}. Let $S(x^{\prime},\xi^{\prime},\tau)$ be a two-sided parametrix for $\Lambda - \tau$, that is, $S(x^{\prime},\xi^{\prime},\tau)$ is a pseudodifferential operator of order $-1$ in $\xi^{\prime}$ with parameter $\tau$ for which (see \cite[Chap.\,II, Sect.\,9]{Shubin01}, \cite[Chap.\,1, Sect.\,1.6]{Gilkey95} or \cite[p.\,16]{Taylor11.2})
\begin{align*}
    & S(x^{\prime},\xi^{\prime},\tau)(\Lambda - \tau) = 1 \mod OPS^{-\infty}, \\
    & (\Lambda - \tau)S(x^{\prime},\xi^{\prime},\tau) = 1 \mod OPS^{-\infty}.
\end{align*}
Let
\begin{align*}
    s(x^{\prime},\xi^{\prime},\tau) \sim \sum_{j \leqslant -1} s_j(x^{\prime},\xi^{\prime},\tau)
\end{align*}
be the full symbol of $S(x^{\prime},\xi^{\prime},\tau)$. We denote by $s_{j}=s_{j}(x^{\prime},\xi^{\prime},\tau)$, $j\leqslant -1$, for short. Then,
\begin{align}
    \label{2.14}
    &s_{-1} = (w_1-\tau)^{-1},\\
    \label{2.14.1}
    &s_{-1-m} = -s_{-1} \sum_{\substack{-m \leqslant j \leqslant 1 \\ -m \leqslant k \leqslant -1 \\ |J| = j + k + m}} \frac{(-i)^{|J|}}{J !} \partial_{\xi^{\prime}}^{J}  \sigma_j(\Lambda) \, \partial_{x^\prime}^{J}  s_k, \quad m \geqslant 1.
\end{align}

For later reference, we write out the expressions for $s_{-2}$, $s_{-3}$, and $s_{-4}$ as follows:
\begin{align}
    s_{-2} & = -s_{-1}
    \biggl[
        w_0s_{-1} - i\sum_\alpha \frac{\partial w_1}{\partial \xi_\alpha} \frac{\partial s_{-1}}{\partial x_\alpha}
    \biggr], \label{2.16}\\
    s_{-3} & = -s_{-1}
    \bigg[
        w_0s_{-2} + w_{-1}s_{-1} - i \sum_\alpha
        \bigg(
            \frac{\partial w_1}{\partial \xi_\alpha} \frac{\partial s_{-2}}{\partial x_\alpha} + \frac{\partial w_0}{\partial \xi_\alpha} \frac{\partial s_{-1}}{\partial x_\alpha}
        \bigg) \notag\\
        & \quad - \frac{1}{2} \sum_{\alpha,\beta} \frac{\partial^2 w_1}{\partial \xi_{\alpha} \partial \xi_{\beta}} \frac{\partial^2 s_{-1}}{\partial x_{\alpha} \partial x_{\beta}}
    \bigg], \label{2.17}\\
    \label{2.18} s_{-4} & = -s_{-1}
    \bigg[
        w_0s_{-3} + w_{-1}s_{-2} + w_{-2}s_{-1} - i\sum_\alpha \Big(\frac{\partial w_1}{\partial \xi_\alpha} \frac{\partial s_{-3}}{\partial x_\alpha} + \frac{\partial w_0}{\partial \xi_\alpha} \frac{\partial s_{-2}}{\partial x_\alpha}  \notag\\
        & \quad + \frac{\partial w_{-1}}{\partial \xi_\alpha} \frac{\partial s_{-1}}{\partial x_\alpha}\Big)
        - \frac{1}{2} \sum_{\alpha,\beta}\Big(\frac{\partial^2 w_1}{\partial \xi_{\alpha} \partial \xi_{\beta}} \frac{\partial^2 s_{-2}}{\partial x_{\alpha} \partial x_{\beta}} + \frac{\partial^2 w_0}{\partial \xi_{\alpha} \partial \xi_{\beta}} \frac{\partial^2 s_{-1}}{\partial x_{\alpha} \partial x_{\beta}}\Big)  \notag\\
        & \quad + \frac{i}{6} \sum_{\alpha,\beta,\gamma} \frac{\partial^3 w_1}{\partial \xi_{\alpha} \partial \xi_{\beta} \partial \xi_{\gamma}} \frac{\partial^3 s_{-1}}{\partial x_{\alpha} \partial x_{\beta} \partial x_{\gamma}}
    \bigg]. 
\end{align}

\subsection{Coefficients of the asymptotic expansion of heat trace}

Inspired by \cite{LiuTan23,Grubb86,Liu15,Liu19,Seeley67,Seeley69}, we will establish an effective procedure to calculate the first $n-1$ coefficients of the asymptotic expansion of heat trace associated with the Dirichlet-to-Neumann map $\Lambda$.

According to the theory of elliptic equations (see \cite{Morrey66,Morrey58,Stewart74}), we see that the Dirichlet-to-Neumann map $\Lambda$ can generate a strongly continuous generalized heat semigroup $e^{-t\Lambda}$, $t>0$, in a suitable space defined on the boundary $\partial  M$. Furthermore, there exists a Schwartz kernel $\mathcal{K}(t,x^{\prime},y^{\prime})$ such that (see \cite[p.\,4]{Fried64} or \cite{Seeley67})
\begin{align*}
    e^{-t\Lambda} \phi(x^{\prime})
    = \int_{\partial  M} \mathcal{K}(t,x^{\prime},y^{\prime})\phi(y^{\prime}) \,dS(y^{\prime})
\end{align*}
for $\phi \in H^{1}(\partial  M)$. Let $\{u_k\}_{k \geqslant 1}$ be the orthonormal eigenfunctions of the Dirichlet-to-Neumann map $\Lambda$ corresponding to the eigenvalues $\{\lambda_k\}_{k \geqslant 1}$, then the kernel $\mathcal{K}(t,x^{\prime},y^{\prime})$ is given by
\begin{align*}
    \mathcal{K}(t,x^{\prime},y^{\prime})
    = e^{-t\Lambda} \delta(x^{\prime} - y^{\prime})
    = \sum_{k=1}^{\infty} e^{-t\lambda_k} u_k(x^{\prime}) \otimes u_k(y^{\prime}).
\end{align*}
Note that the Dirac function $\delta(x^{\prime} - y^{\prime})$ is defined on $\partial  M\times \partial  M$, one of the entries is fixed. This implies that the integral of the trace of the kernel $\mathcal{K}(t,x^{\prime},y^{\prime})$, i.e.,
\begin{align*}
    \int_{\partial  M} \mathcal{K}(t,x^{\prime},x^{\prime}) \,dS
    = \sum_{k=1}^{\infty} e^{-t\lambda_k}
\end{align*}
is actually a spectral invariant. 

On the other hand, the semigroup $e^{-t\Lambda}$ can also be represented as
\begin{align*}
    e^{-t\Lambda}
    = \frac{i}{2\pi}\int_{\mathcal{C}} e^{-t\tau}(\Lambda-\tau)^{-1} \,d\tau,
\end{align*}
where $\mathcal{C}$ is a suitable curve in the complex plane in the positive direction around the spectrum of $\Lambda$, that is, $\mathcal{C}$ is a contour around the positive real axis. It follows that (see \cite{Agra87,DuisGuill75,GrubbSeeley95,Liu15}), in local coordinates $(x^{\prime},\xi^{\prime})$,
\begin{align*}
    \mathcal{K}(t,x^{\prime},y^{\prime})
    & = e^{-t\Lambda} \delta(x^{\prime} - y^{\prime}) \\
    & = \frac{1}{(2\pi)^{n-1}} \int_{T^{*}_{x^\prime}(\partial  M)} e^{i \langle x^{\prime} - y^{\prime},\xi^{\prime} \rangle} 
    \bigg( 
        \frac{i}{2\pi}\int_{\mathcal{C}} e^{-t\tau} \sigma((\Lambda-\tau)^{-1}) \,d\tau
    \bigg) \,d\xi^{\prime} \\
    & = \frac{1}{(2\pi)^{n-1}} \int_{T^{*}_{x^\prime}(\partial  M)} e^{i \langle x^{\prime} - y^{\prime},\xi^{\prime} \rangle}
    \bigg(
        \frac{i}{2\pi}\int_{\mathcal{C}} e^{-t\tau} \sum_{j \leqslant -1} s_{j}(x^{\prime},\xi^{\prime},\tau) \,d\tau 
    \bigg) \,d\xi^{\prime},
\end{align*}
where $\sigma((\Lambda-\tau)^{-1})\sim\sum_{j \leqslant -1} s_{j}(x^{\prime},\xi^{\prime},\tau)$ is the full symbol of the pseudodifferential operator $(\Lambda-\tau)^{-1}$, such that the restriction to the diagonal of the kernel $\mathcal{K}(t,x^{\prime},y^{\prime})$ is
\begin{align*}
    \mathcal{K}(t,x^{\prime},x^{\prime})
    = \frac{1}{(2\pi)^{n-1}} \int_{T^{*}_{x^\prime}(\partial  M)}
    \bigg(
        \frac{i}{2\pi} \int_{\mathcal{C}} e^{-t\tau}\sum_{j \leqslant -1} s_{j}(x^{\prime},\xi^{\prime},\tau) \,d\tau 
    \bigg) \,d\xi^{\prime}.
\end{align*}
Therefore,
\begin{align*}
    \sum_{k=1}^{\infty} e^{-t\lambda_k}
    = \int_{\partial  M} 
    \bigg[
        \frac{1}{(2\pi)^{n-1}} \int_{T^{*}_{x^\prime}(\partial  M)}
        \bigg(
            \frac{i}{2\pi}\int_{\mathcal{C}} e^{-t\tau}\sum_{j \leqslant -1} s_{j}(x^{\prime},\xi^{\prime},\tau) \,d\tau 
        \bigg) \,d\xi^{\prime}
    \bigg] \,dS.
\end{align*}
We will calculate the asymptotic expansion of the trace of the semigroup $e^{-t\Lambda}$ as $t \to 0^{+}$. More precisely, we will establish a procedure to calculate
\begin{equation}\label{3.1}
    a_{k}(x^{\prime}) 
    = \frac{i}{(2 \pi)^{n}}\int_{T^{*}_{x^\prime}(\partial  M)}\int_{\mathcal{C}} e^{-\tau}s_{-1-k}(x^{\prime},\xi^{\prime},\tau) \,d\tau \,d\xi^{\prime}
\end{equation}
for $0 \leqslant k \leqslant n-1$ (see \cite{Seeley67} and also \cite{LiuTan23,Liu15,Liu19,PoltSher15}).

\section{Computations of \texorpdfstring{$a_0(x^\prime)$ and $a_1(x^\prime)$}{}}\label{s4}

In what follows, we will give the explicit expressions for the first four coefficients of the asymptotic expansion of heat trace in boundary normal coordinates. We compute these coefficients at the origin $x_0$ in boundary normal coordinates since the origin $x_0\in\partial M$ can be chosen arbitrarily. Hence, the coefficients can be obtained immediately for any $x^{\prime}\in\partial M$.

It is convenient to calculate the coefficients of the asymptotic expansion by the following two lemmas.
\begin{lemma}\label{lem4.1}
    For any $k \geqslant 1$, we have
    \begin{equation*}
        \frac{1}{2\pi i} \int_{\mathcal{C}} s_{-1}^k e^{-\tau} \,d\tau 
        = -\frac{1}{(k-1)!}e^{-w_1}.
    \end{equation*}
\end{lemma}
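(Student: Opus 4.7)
The plan is to recognize the integral as a direct application of Cauchy's integral formula for higher derivatives. By equation \eqref{2.14} we have $s_{-1} = (w_1 - \tau)^{-1}$, so
\begin{align*}
    s_{-1}^{k} = (w_1 - \tau)^{-k} = (-1)^{k}(\tau - w_1)^{-k}.
\end{align*}
Since $w_1 = |\xi^{\prime}| > 0$ lies on the positive real axis and $\mathcal{C}$ is a contour around the positive real axis in the positive direction, the point $\tau = w_1$ is the unique pole of the integrand $e^{-\tau}/(\tau - w_1)^{k}$ enclosed by $\mathcal{C}$, and it has order $k$.

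Next, I would apply the generalized Cauchy integral formula to the holomorphic function $f(\tau) = e^{-\tau}$:
\begin{align*}
    \frac{1}{2\pi i}\int_{\mathcal{C}} \frac{e^{-\tau}}{(\tau - w_1)^{k}}\,d\tau = \frac{1}{(k-1)!} f^{(k-1)}(w_1).
\end{align*}
Since $f^{(k-1)}(\tau) = (-1)^{k-1} e^{-\tau}$, this evaluates to $(-1)^{k-1} e^{-w_1}/(k-1)!$.

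Combining this with the sign $(-1)^{k}$ from the first step yields
\begin{align*}
    \frac{1}{2\pi i}\int_{\mathcal{C}} s_{-1}^{k} e^{-\tau}\,d\tau
    = (-1)^{k} \cdot \frac{(-1)^{k-1} e^{-w_1}}{(k-1)!}
    = -\frac{1}{(k-1)!} e^{-w_1},
\end{align*}
which is the claimed identity. There is no real obstacle here: the only subtle point is verifying that $w_1 > 0$ lies inside the contour $\mathcal{C}$, which is immediate from the definition of $\mathcal{C}$ as a loop around the positive real axis together with $w_1 = |\xi^{\prime}|$.
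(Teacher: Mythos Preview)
Your proof is correct and follows essentially the same approach as the paper, which simply notes that $s_{-1}=(w_1-\tau)^{-1}$ and invokes the calculus of residues; your use of the Cauchy integral formula for derivatives is exactly that computation written out in detail.
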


\begin{proof}
    Since $s_{-1} = (w_1-\tau)^{-1}$ by \eqref{2.14}. Then the proof is a simple computation with calculus of residues.
\end{proof}

\begin{lemma}\label{lem4.2}
    For any $k$, we have the following integral formulas:
    \begin{align*}
        \int_{\mathbb{R}^{n-1}} w_1^{k} e^{-w_1} \xi_{\alpha}^m \,d\xi^{\prime} &=
        \begin{cases}
            \omega_{n-2}\Gamma(n+k-1), \quad & m = 0, \\
            0, \quad & m = 1,
        \end{cases}\\[2mm]
        \int_{\mathbb{R}^{n-1}} w_1^{k-2} e^{-w_1} \xi_{\alpha} \xi_{\beta} \,d\xi^{\prime} &=
        \begin{cases}
            \displaystyle \frac{\omega_{n-2}\Gamma(n+k-1)}{n-1} , \quad & \alpha = \beta, \\
            0, \quad & \alpha \neq \beta,
        \end{cases}\\[2mm]
        \int_{\mathbb{R}^{n-1}} w_1^{k-4} e^{-w_1} \xi_{\alpha}^2 \xi_{\beta}^2 \,d\xi^{\prime} &=
        \begin{cases}
            \displaystyle \frac{3\omega_{n-2}\Gamma(n+k-1)}{n^{2}-1} , \quad & \alpha = \beta, \\[3mm]
            \displaystyle \frac{\omega_{n-2}\Gamma(n+k-1)}{n^{2}-1} , \quad & \alpha \neq \beta.
        \end{cases}
    \end{align*}
\end{lemma}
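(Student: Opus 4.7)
The plan is to pass to polar coordinates on $\mathbb{R}^{n-1}$, writing $\xi' = r\theta$ with $r=|\xi'|\geq 0$ and $\theta\in\mathbb{S}^{n-2}$, so that $d\xi' = r^{n-2}\,dr\,d\theta$. By \eqref{2.6} and \eqref{10.1}, $w_1(x,\xi')^2 = \sum_{\alpha,\beta}g^{\alpha\beta}\xi_\alpha\xi_\beta$, which at the origin $x_0$ in boundary normal coordinates reduces to the Euclidean norm squared $\sum_\alpha\xi_\alpha^2$ by \eqref{6.3}; hence $w_1 = r$ throughout the integrals of the lemma. Each integrand therefore separates into a radial Gamma-type integral $\int_0^\infty r^{N}e^{-r}\,dr = \Gamma(N+1)$ times an angular integral $\int_{\mathbb{S}^{n-2}}\theta_{\alpha_1}\cdots\theta_{\alpha_j}\,d\theta$. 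The exponents in the lemma are arranged so that the radial factor is the same $\Gamma(n+k-1)$ in all three formulas, reducing the problem to evaluating angular moments of orders $0$, $1$, $2$, and $4$.

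The low-order moments are immediate. Order $0$ gives $\int_{\mathbb{S}^{n-2}}d\theta = \omega_{n-2}$, and the reflection $\theta_\alpha\mapsto-\theta_\alpha$ kills both the order-$1$ moment and the off-diagonal order-$2$ moment $\int\theta_\alpha\theta_\beta\,d\theta$ for $\alpha\neq\beta$. For the diagonal order-$2$ moment, the $O(n-1)$-invariance of the surface measure together with $\sum_\gamma\theta_\gamma^2 = 1$ forces $\int_{\mathbb{S}^{n-2}}\theta_\alpha^2\,d\theta = \omega_{n-2}/(n-1)$. Pairing these angular values with the radial factor $\Gamma(n+k-1)$ proves the first two formulas.

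The only substantive step is the order-$4$ moment required for the third formula. I would use the standard invariant-theory argument: the totally symmetric $O(n-1)$-invariant tensor
\[
T_{\alpha\beta\gamma\rho} := \int_{\mathbb{S}^{n-2}}\theta_\alpha\theta_\beta\theta_\gamma\theta_\rho\,d\theta
\]
must be a scalar multiple of $\delta_{\alpha\beta}\delta_{\gamma\rho}+\delta_{\alpha\gamma}\delta_{\beta\rho}+\delta_{\alpha\rho}\delta_{\beta\gamma}$; call the scalar $c$. The double trace, using $\bigl(\sum_\alpha\theta_\alpha^2\bigr)^2 = 1$, then gives
\[
\omega_{n-2} \;=\; \sum_{\alpha,\gamma}T_{\alpha\alpha\gamma\gamma} \;=\; c\,(n-1)(n+1),
\]
so $c = \omega_{n-2}/(n^2-1)$. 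Specializing yields $\int\theta_\alpha^4\,d\theta = 3c$ and, for $\alpha\neq\beta$, $\int\theta_\alpha^2\theta_\beta^2\,d\theta = c$, which when multiplied by $\Gamma(n+k-1)$ deliver exactly the constants $3\omega_{n-2}\Gamma(n+k-1)/(n^2-1)$ and $\omega_{n-2}\Gamma(n+k-1)/(n^2-1)$ claimed in the lemma. There is no genuine obstacle; the only bookkeeping care is to match the exponent shifts ($w_1^{k-2}$ with $\xi_\alpha\xi_\beta$, $w_1^{k-4}$ with $\xi_\alpha^2\xi_\beta^2$) so that the radial power consistently produces $\Gamma(n+k-1)$.
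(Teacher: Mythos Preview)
Your proposal is correct and follows exactly the approach the paper indicates: the paper's proof says only that ``the proof can be obtained by applying the spherical coordinates transform,'' and your argument is a clean, fully detailed execution of precisely that idea, with the radial Gamma integral and the angular moments handled via $O(n-1)$-invariance.
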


\begin{proof}
    The proof can be obtained by applying the spherical coordinates transform (see also \cite{Liu15}).
\end{proof}

In order to simplify the computations and to investigate the important roles of the drifting function $\phi$ and the potential $V$, we consider the terms that only involve $\phi$ or $V$ in an expression $f(x)$ (or $f(x,\xi^\prime,\tau)$) at the origin $x_{0}$ in boundary normal coordinates, and we denote by $f(\phi,V)$ the sum of these terms that only involve $\phi$ or $V$ at $x_{0}$. Thus, it follows from formula \eqref{3.1} that
\begin{equation}\label{3.9}
    a_{k}(\phi,V) 
    = \frac{i}{(2 \pi)^{n}}\int_{\mathbb{R}^{n-1}}\int_{\mathcal{C}} e^{-\tau}s_{-1-k}(\phi,V) \,d\tau \,d\xi^{\prime}
\end{equation}
for $0 \leqslant k \leqslant n-1$. Therefore, we get
\begin{align}\label{3.10}
    a_{k}(x_0) = \tilde{a}_{k}(x_0) + a_{k}(\phi,V)
\end{align}
for $0 \leqslant k \leqslant n-1$, where $\tilde{a}_{k}(x_0)$ are the spectral invariants of the classical Dirichlet-to-Neumann map associated with the Laplace--Beltrami operator.

For later reference, here we list the expressions for $\tilde{a}_k(x^\prime)$, $k=0,1,2,3$, as follows (see \cite[Theorem 5.1]{Liu15}). Noting that the dimension of the manifold is $n+1$ in \cite{Liu15}, here we have changed the dimension to $\dim M = n$.
\begin{align}
    \tilde{a}_0(x^{\prime}) & = \frac{\omega_{n-2}\Gamma(n-1)}{(2\pi)^{n-1}}, \quad n \geqslant 1, \label{328}\\
    \tilde{a}_1(x^{\prime}) & = \frac{(n-2)\omega_{n-2}\Gamma(n-1)}{2(2\pi)^{n-1}(n-1)}H, \quad n \geqslant 2, \label{329}\\
    \tilde{a}_2(x^{\prime}) & = \frac{\omega_{n-2}\Gamma(n-2)}{4(2\pi)^{n-1}}
    \biggl[
        \frac{n^3-4n^2+n+8}{2(n^2-1)}H^2 + \frac{n(n-3)}{2(n^2-1)} \sum_{\alpha=1}^{n-1} \kappa_{\alpha}^2  \notag\\
        & \quad + \frac{n-2}{2(n-1)}\tilde{R} - \frac{n-4}{6(n-1)}R
    \biggr], \quad n \geqslant 3, \label{330}\\
    \tilde{a}_3(x^{\prime}) & = \frac{\omega_{n-2}\Gamma(n-3)}{8(2\pi)^{n-1}}
    \biggl[
        \frac{n^5-5n^4-10n^3+52n^2+2n-114}{6(n^2-1)(n+3)}H^3 \notag\\
        & \quad + \frac{n^4-n^3-12n^2+22n+6}{2(n^2-1)(n+3)}H\sum_{\alpha=1}^{n-1} \kappa_{\alpha}^2 - \frac{4(n-3)(n-2)}{3(n^2-1)(n+3)}\sum_{\alpha=1}^{n-1} \kappa_{\alpha}^3  \notag\\
        & \quad + \frac{n^3-6n^2+2n+14}{2(n^2-1)}H \tilde{R} - \frac{3n^3-20n^2+12n+42}{6(n^2-1)}HR \notag\\
        & \quad + \frac{2(n^2-3n+1)}{n^2-1}\sum_{\alpha=1}^{n-1} \kappa_{\alpha}\tilde{R}_{\alpha\alpha} - \frac{2n(3n-8)}{3(n^2-1)}\sum_{\alpha=1}^{n-1} \kappa_{\alpha}R_{\alpha\alpha} \notag\\
        & \quad + \frac{n-2}{n-1} \nabla_{\frac{\partial}{\partial x_n}}\tilde{R}_{nn}
    \biggr], \quad n \geqslant 4. \label{331}
\end{align}

Now we compute the first two coefficients $a_0(x^\prime)$ and $a_1(x^\prime)$.

\subsection{Computation of \texorpdfstring{$a_0(x^\prime)$}{}}

Since $s_{-1}(\phi,V)=0$ by \eqref{2.6} and \eqref{2.14}. Applying formula \eqref{3.1}, we get
\begin{align}\label{a00}
    a_{0}(\phi,V)=0.
\end{align}
Therefore, combining \eqref{3.10}, \eqref{328}, and \eqref{a00}, we immediately obtain \eqref{a0}.

\subsection{Computation of \texorpdfstring{$a_1(x^\prime)$}{}}

It follows from \eqref{2.6} and \eqref{10.1} that
\begin{align}\label{8.2}
    \frac{\partial w_1^m}{\partial x_j}
    =\frac{m}{2}w_1^{m-2}\frac{\partial w_1^2}{\partial x_j}
    =\frac{m}{2}w_1^{m-2} \sum_{\alpha,\beta}g^{\alpha\beta,j}\xi_\alpha\xi_{\beta},\quad m\in\mathbb{Z}.
\end{align}
We denote by $\frac{\partial w_1^m}{\partial x_{j}}(x_0)=\frac{\partial w_1^m}{\partial x_{j}}(x_0,\xi^{\prime})$, $1 \leqslant j \leqslant n$. Then, at the origin $x_0$ in boundary coordinates, by applying \eqref{6.3}--\eqref{6.3.2} we get $\xi^\alpha(x_0)=\xi_\alpha$ and
\begin{align}\label{8.7}
    \frac{\partial w_1^m}{\partial x_{\alpha}}(x_0)=0,\quad 
    \frac{\partial w_1^m}{\partial x_{n}}(x_0)=m w_1^{m-2}\sum_\alpha\kappa_{\alpha} \xi_{\alpha}^2,\quad m\in\mathbb{Z}.
\end{align}

By combining \eqref{6.1}--\eqref{6.3.2}, \eqref{6.6}, and \eqref{6.8}, we have
\begin{align}
    \label{3.12.1} b(x_{0})& =-(H+\phi_n),\\
    \label{3.12.2} c_1(x_{0}) &= -i\sum_\alpha \phi_\alpha\xi_\alpha,
\end{align}
where $c_1(x_0)=c_1(x_0,\xi^{\prime})$.

From \eqref{2.14}, \eqref{2.16}, and \eqref{8.7} we compute
\begin{align}
    \label{8.6}
    &\frac{\partial s_{-1}}{\partial x_\alpha} = -s_{-1}^2\frac{\partial w_1}{\partial x_\alpha},\quad
    \frac{\partial s_{-1}}{\partial x_\alpha}(x_0)=0,\\
    \label{8.4}
    &s_{-2}(x_{0}) = -s_{-1}^2w_0.
\end{align}

Note that, for any function $f(\xi^{\prime})$, any term that is odd in $\xi_\alpha$ for any particular index $\alpha$ will integrate to zero when integrating over $\mathbb{R}^{n-1}$. Thus, we only consider the terms that are even in $\xi_\alpha$ in $f(\xi^{\prime})$ when integrating over $\mathbb{R}^{n-1}$. We henceforth denote by (cf. \cite{PoltSher15})
    \begin{align*}
        f(\xi^{\prime}) \cong  g(\xi^{\prime}),
    \end{align*}
    where $g(\xi^{\prime})$ is the sum of the terms that are even in $\xi_\alpha$ in $f(\xi^{\prime})$. More precisely, $f(\xi^{\prime}) \cong g(\xi^{\prime})$ if and only if
    \begin{equation*}
        \int_{\mathbb{R}^{n-1}} e^{-w_1}\big[f(\xi^{\prime}) - g(\xi^{\prime})\big] \,d\xi^{\prime} = 0.
    \end{equation*}
    For example,
    \begin{align*}
        \biggl(\sum_{\alpha}\phi_{\alpha}\xi_\alpha\biggr)^2+\sum_{\alpha}\kappa_{\alpha}\xi_\alpha
        =\sum_{\alpha,\beta}\phi_{\alpha}\phi_\beta\xi_\alpha\xi_\beta+\sum_{\alpha}\kappa_{\alpha}\xi_\alpha \cong \sum_{\alpha}\phi_{\alpha}^2\xi_\alpha^2.
    \end{align*}

Substituting \eqref{6.6} and \eqref{6.8} into \eqref{2.7}, we have
\begin{align}\label{8.1}
    w_0(\phi,V) \cong -\frac{1}{2}\phi_n.
\end{align}
Then, from \eqref{8.4} and \eqref{8.1}, we obtain
\begin{align}\label{8.5}
    s_{-2}(\phi,V)\cong \frac{1}{2}s_{-1}^2\phi_n.
\end{align}
It follows from formula \eqref{3.1}, Lemma \ref{lem4.1} and \ref{lem4.2} that
\begin{align}\label{a11}
    a_{1}(\phi,V)
    = \frac{\omega_{n-2}\Gamma(n-1)}{2(2\pi)^{n-1}}\phi_n.
\end{align}
Therefore, combining \eqref{3.10}, \eqref{329}, and \eqref{a11}, we immediately obtain \eqref{a1}.

\section{Computation of \texorpdfstring{$a_2(x^\prime)$}{}}\label{s5}

Recall that $\frac{\partial s_{-1}}{\partial x_\alpha}(x_0)=0$ by \eqref{8.6}, $w_1$ and $s_{-1}$ do not involve $\phi$ or $V$. Thus, from \eqref{2.17} we see that
\begin{align}\label{8.8}
    s_{-3}(\phi,V) 
    = -s_{-1} \biggl(w_0s_{-2} + w_{-1}s_{-1} - i \sum_\alpha\frac{\partial w_1}{\partial \xi_\alpha} \frac{\partial s_{-2}}{\partial x_\alpha}\biggr).
\end{align}
Note that $\frac{\partial w_{1}}{\partial x_\alpha}(x_0)=0$ by \eqref{8.7}. From \eqref{2.8} we have
\begin{align}\label{8.10}
    w_{-1}(\phi,V) 
    = \frac{1}{2} w_1^{-1} 
    \bigg(
        -w_0^2 + i\sum_\alpha \frac{\partial w_1}{\partial \xi_\alpha}\frac{\partial w_0}{\partial x_\alpha} + bw_0 + \frac{\partial w_0}{\partial x_n} - c_0
    \bigg).
\end{align}
By \eqref{2.16} we have
\begin{align}\label{b4}
    \frac{\partial s_{-2}}{\partial x_{\alpha}}(\phi,V)=-s_{-1}^2\frac{\partial w_{0}}{\partial x_{\alpha}}.
\end{align}
Then, substituting \eqref{8.4}, \eqref{8.10}, and \eqref{b4} into \eqref{8.8} we get 
\begin{align}\label{b9}
    s_{-3}(\phi,V)
    &=\Big(s_{-1}^3+\frac{1}{2}s_{-1}^2w_1^{-1}\Big)\Big(w_0^2-i\sum_{\alpha}\frac{\partial w_1}{\partial \xi_\alpha}\frac{\partial w_0}{\partial x_\alpha}\Big) -\frac{1}{2}s_{-1}^2w_1^{-1}\Big(bw_0+\frac{\partial w_0}{\partial x_n}-c_0\Big).
\end{align}

From \eqref{2.7} we have
\begin{align}\label{b5}
    w_0(x_0)=\frac{1}{2}\Big(b+w_{1}^{-1}\frac{\partial w_1}{\partial x_n}-w_1^{-1}c_1\Big).
\end{align}
Then,
\begin{align}
    \label{b6} &w_0^2(\phi,V)\cong\frac{1}{4}b^2+\frac{1}{2}bw_{1}^{-1}\frac{\partial w_1}{\partial x_n}+\frac{1}{4}w_1^{-2}c_1^2,\\
    &\frac{\partial w_0}{\partial x_\alpha}(\phi,V)=\frac{1}{2}w_1^{-1}\Big(\frac{\partial b}{\partial x_\alpha}-\frac{\partial c_{1}}{\partial x_\alpha}\Big).\notag
\end{align}
By \eqref{2.6} we see that $w_1=|\xi^\prime|$ is even in $\xi^{\prime}$ and without $\phi$ or $V$. Thus $\frac{\partial w_1}{\partial \xi_\alpha}$ is odd in $\xi^{\prime}$, whereas $c_1$ and $\frac{\partial c_{1}}{\partial x_\alpha}$ are odd in $\xi^{\prime}$. Hence,
\begin{align}\label{b7}
    \Big(\sum_{\alpha}\frac{\partial w_1}{\partial \xi_\alpha}\frac{\partial w_0}{\partial x_\alpha}\Big)(\phi,V)\cong -\frac{1}{2}w_1^{-1}\sum_{\alpha}\frac{\partial w_1}{\partial \xi_\alpha}\frac{\partial c_1}{\partial x_\alpha}.
\end{align}
By \eqref{2.7} we have
\begin{align}\label{b8}
    \frac{\partial w_0}{\partial x_n}(\phi,V)\cong \frac{1}{2}\frac{\partial b}{\partial x_n}.
\end{align}
Thus, substituting \eqref{b5}, \eqref{b6}, \eqref{b7}, and \eqref{b8} into \eqref{b9} we get 
\begin{align}\label{b12}
    s_{-3}(\phi,V) &\cong \frac{1}{4}\Big(s_{-1}^3-\frac{1}{2}s_{-1}^2w_1^{-1}\Big)b^2+\frac{1}{2}s_{-1}^3w_1^{-1}b\frac{\partial w_1}{\partial x_n}+\frac{1}{4}\Big(s_{-1}^3w_1^{-2}+\frac{1}{2}s_{-1}^2w_1^{-3}\Big)c_1^2 \notag\\
    &\quad +\frac{i}{2}\Big(s_{-1}^3w_1^{-1}+\frac{1}{2}s_{-1}^2w_1^{-2}\Big)\sum_{\alpha}\frac{\partial w_1}{\partial \xi_\alpha}\frac{\partial c_1}{\partial x_\alpha}-\frac{1}{2}s_{-1}^2w_1^{-1}\Big(\frac{1}{2}\frac{\partial b}{\partial x_n}-c_0\Big).
\end{align}

It follows from \eqref{8.7} and \eqref{3.12.1} that
\begin{align}
    &b^2(\phi,V)\cong \phi_n^2+2H\phi_n,\label{b10}\\
    &\frac{\partial w_1}{\partial x_n}(x_0)=w_1^{-1}\sum_{\alpha}\kappa_{\alpha}\xi_{\alpha}^2,\\
    &\Big(b\frac{\partial w_1}{\partial x_n}\Big)(\phi,V)=-\phi_n w_1^{-1}\sum_{\alpha}\kappa_{\alpha}\xi_{\alpha}^2,\\
    &\frac{\partial b}{\partial x_n}(\phi,V)=-\phi_{nn}.
\end{align}
Recall that $w_1=|\xi^\prime|$ by \eqref{2.6}. Then, we compute
\begin{align}\label{8.15}
    \frac{\partial w_1^m}{\partial \xi_\alpha}
    =m w_1^{m-1} \frac{\partial w_1}{\partial \xi_\alpha}
    =m w_1^{m-2}\xi^{\alpha},\quad
    \frac{\partial w_1^m}{\partial \xi_{\alpha}}(x_0)
    =m w_1^{m-2}\xi_{\alpha},\quad m\in\mathbb{Z}.
\end{align}
By \eqref{3.12.2}, \eqref{6.8}, and \eqref{8.15} we see that
\begin{align}
    &c_1^2(\phi,V)\cong -\sum_{\alpha}\phi_{\alpha}^2\xi_{\alpha}^2,\\
    &\frac{\partial c_1}{\partial x_\alpha}(\phi,V)=-i\sum_{\beta}\phi_{\alpha\beta}^2\xi_{\beta},\\
    &\Big(\sum_{\alpha}\frac{\partial w_1}{\partial \xi_\alpha}\frac{\partial c_1}{\partial x_\alpha}\Big)(\phi,V)\cong -iw_1^{-1}\sum_{\alpha}\phi_{\alpha\alpha}\xi_{\alpha}^2.\label{b11}
\end{align}
Thus, substituting \eqref{b10}--\eqref{b11}, and \eqref{6.9} into \eqref{b12} we obtain
\begin{align}
    s_{-3}(\phi,V) &\cong \frac{1}{4}\Big(s_{-1}^3-\frac{1}{2}s_{-1}^2w_1^{-1}\Big)(\phi_n^2+2H\phi_n)-\frac{1}{2}s_{-1}^3w_1^{-2}\phi_n \sum_{\alpha}\kappa_{\alpha}\xi_{\alpha}^2\notag\\
    &\quad -\frac{1}{4}\Big(s_{-1}^3w_1^{-2}+\frac{1}{2}s_{-1}^2w_1^{-3}\Big)\sum_{\alpha}\phi_{\alpha}^2\xi_{\alpha}^2 +\frac{1}{2}\Big(s_{-1}^3w_1^{-2}+\frac{1}{2}s_{-1}^2w_1^{-3}\Big)\sum_{\alpha}\phi_{\alpha\alpha}\xi_{\alpha}^2\notag\\
    &\quad +\frac{1}{2}s_{-1}^2w_1^{-1}\Big(\frac{1}{2}\phi_{nn}+V\Big).
\end{align}

By applying formula \eqref{3.9}, Lemma \ref{lem4.1} and \ref{lem4.2}, we compute
\begin{align}\label{8.31}
    a_2(\phi,V) & = \frac{\omega_{n-2}\Gamma(n-2)}{4(2\pi)^{n-1}}
        \biggl[
            \frac{n-2}{2}\phi_{n}^2 + \frac{n^2-5n+5}{n-1}H\phi_{n} + \Delta\phi - \frac{1}{2}|\nabla\phi|^2 + 2V 
        \biggr].
\end{align}
Therefore, combining \eqref{3.10}, \eqref{330}, and \eqref{8.31} we immediately obtain \eqref{a2}.

\section{Computation of \texorpdfstring{$a_3(x^{\prime})$}{}}\label{s6}

It follows from \eqref{2.6}, \eqref{10.1}, and \eqref{8.2} that
\begin{align}\label{7.11}
    \frac{\partial^2 w_1}{\partial x_j \partial x_k}
    &=\frac{1}{2}w_1^{-1}\sum_{\alpha,\beta} g^{\alpha\beta,jk}\xi_{\alpha}\xi_\beta-\frac{1}{4}w_1^{-3}\biggl(\sum_{\alpha,\beta}g^{\alpha\beta,j}\xi_\alpha\xi_{\beta}\biggr)\biggl(\sum_{\alpha,\beta}g^{\alpha\beta,k}\xi_\alpha\xi_{\beta}\biggr).
\end{align}
In particular, by \eqref{6.3}--\eqref{6.3.2} and \eqref{7.11}, we get
\begin{align}
    \label{7.1.2} &\frac{\partial^2 w_1}{\partial x_\alpha \partial x_\beta}(x_0)=\frac{1}{2}w_1^{-1} \sum_{\gamma,\rho} g^{\gamma\rho,\alpha\beta}\xi_{\gamma}\xi_\rho,\\
    \label{7.1.3} &\frac{\partial^2 w_1}{\partial x_n \partial x_\alpha}(x_0)=0, \quad 
    \frac{\partial^2 w_1^{-1}}{\partial x_n \partial x_\alpha}(x_0)=0,\\
    \label{7.1.4} &\frac{\partial^2 w_1}{\partial x_n^2}(x_0)=\frac{1}{2}w_1^{-1}\sum_{\alpha,\beta} g^{\alpha\beta,nn}\xi_{\alpha}\xi_\beta-w_1^{-3}\sum_{\alpha,\beta}\kappa_\alpha\kappa_\beta\xi_\alpha^2\xi_\beta^2.
\end{align}

\begin{lemma}
   In boundary normal coordinates, at the origin $x_0\in\partial M$,
    \begin{align}
        &\sum_{\alpha,\beta} \frac{\partial w_1}{\partial \xi_\alpha}\frac{\partial w_1}{\partial \xi_\beta}\frac{\partial^2 w_1}{\partial x_{\alpha} \partial x_{\beta}}(x_0)
        \cong 0,\label{16.6}\\
        &\biggl(\sum_{\alpha,\beta} \frac{\partial w_1}{\partial \xi_\alpha}\frac{\partial w_0}{\partial \xi_\beta}\frac{\partial^2 w_1}{\partial x_{\alpha} \partial x_{\beta}}\biggr)(\phi,V)
        \cong 0,\label{16.7}\\
        &\biggl(\sum_{\alpha,\beta}\frac{\partial^2 w_0}{\partial \xi_{\alpha} \partial \xi_{\beta}}\frac{\partial^2 w_1}{\partial x_\alpha\partial x_\beta}\biggr)(\phi,V)
        \cong 0.\label{16.8}
    \end{align}
\end{lemma}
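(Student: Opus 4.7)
The plan is to convert each identity, evaluated at the origin $x_0$ in boundary normal coordinates, into an explicit expression in $\xi^\prime$ using the pointwise formulas \eqref{8.15}, \eqref{7.1.2}, and \eqref{2.7}, and then dispose of each resulting integral either by a curvature-symmetry argument or by a parity-in-$\xi^\prime$ argument.

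For \eqref{16.6}, combining \eqref{8.15} and \eqref{7.1.2} rewrites the sum at $x_0$ as $\tfrac{1}{2}w_1^{-3}\sum_{\alpha,\beta,\gamma,\rho}g^{\gamma\rho,\alpha\beta}(x_0)\,\xi_\alpha\xi_\beta\xi_\gamma\xi_\rho$. Under the paper's convention \eqref{6.3.1}, the tangential coordinates at $x_0$ are normal for the boundary metric, and the standard normal-coordinate expansion gives
\[
g_{\alpha\beta,\gamma\rho}(x_0) = -\tfrac{1}{3}\bigl(R_{\alpha\gamma\beta\rho} + R_{\alpha\rho\beta\gamma}\bigr)(x_0),
\]
with $R$ the Riemann tensor of $\partial M$; differentiating $g^{\alpha\delta}g_{\delta\beta}=\delta^\alpha_\beta$ twice and using \eqref{6.3.1} shows $g^{\alpha\beta,\gamma\rho}(x_0)=-g_{\alpha\beta,\gamma\rho}(x_0)$. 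Since $R_{\alpha\gamma\beta\rho}$ is antisymmetric in $(\alpha,\gamma)$ whereas $\xi_\alpha\xi_\gamma$ is symmetric, the relabeling $\alpha\leftrightarrow\gamma$ shows $\sum R_{\alpha\gamma\beta\rho}\xi_\alpha\xi_\beta\xi_\gamma\xi_\rho$ equals its own negative; the same argument, now with $\alpha\leftrightarrow\gamma$ again, handles $\sum R_{\alpha\rho\beta\gamma}\xi_\alpha\xi_\beta\xi_\gamma\xi_\rho$. Both contractions vanish, so the integrand is pointwise zero at $x_0$ and \eqref{16.6} follows a fortiori.

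For \eqref{16.7} and \eqref{16.8}, I would argue by parity in $\xi^\prime$. Inserting \eqref{3.12.1} and \eqref{3.12.2} into \eqref{2.7} yields
\[
w_0(\phi,V)(x_0) \;=\; -\tfrac{1}{2}\phi_n + \tfrac{i}{2}\,w_1^{-1}\sum_\alpha \phi_\alpha\xi_\alpha
\]
(the full expression, not its $\cong$-reduction \eqref{8.1}, since we shall differentiate). A direct computation gives that $\partial_{\xi_\beta}w_0(\phi,V)(x_0)$ is a sum of monomials in $\xi^\prime$ of polynomial degree $0$ or $2$, while $\partial_{\xi_\alpha}\partial_{\xi_\beta}w_0(\phi,V)(x_0)$ is a sum of monomials of degree $1$ or $3$ (all times negative powers of $w_1$, which are even in $\xi^\prime$). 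Since $\partial_{\xi_\alpha}w_1(x_0)=w_1^{-1}\xi_\alpha$ has polynomial degree $1$ and $\partial_{x_\alpha}\partial_{x_\beta}w_1(x_0)$ has polynomial degree $2$ by \eqref{7.1.2}, the integrand of \eqref{16.7} is a sum of monomials $\xi_1^{a_1}\cdots\xi_{n-1}^{a_{n-1}}$ of total degree $3$ or $5$, and similarly the integrand of \eqref{16.8} has total degree $3$ or $5$. Any monomial of odd total degree carries at least one odd exponent, so it integrates to zero against the reflection-invariant weight $e^{-w_1}\,d\xi^\prime$, yielding $\cong 0$ in each case.

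The main obstacle is \eqref{16.6}: its integrand has even parity in $\xi^\prime$, so the parity argument that handles the other two identities does not apply. One is forced to identify $g^{\gamma\rho,\alpha\beta}(x_0)$ with second-order boundary-metric data, express it via $R$ using the tangential normal-coordinate convention, and then exploit the antisymmetry of $R$ against the totally symmetric $\xi^4$ contraction. Once this geometric reduction is completed, the other two identities fall out from routine polynomial-degree bookkeeping.
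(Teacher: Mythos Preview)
Your proof is correct and follows essentially the same route as the paper: for \eqref{16.6} you rewrite the sum via \eqref{8.15}, \eqref{7.1.2} and the normal-coordinate identity expressing $g^{\gamma\rho,\alpha\beta}(x_0)$ in terms of the boundary Riemann tensor, and for \eqref{16.7}--\eqref{16.8} you use the parity of $w_0(\phi,V)$ in $\xi'$, just as the paper does. Your antisymmetry argument for \eqref{16.6} is in fact slightly slicker than the paper's explicit case-by-case check on index coincidences; note, however, a small slip in your write-up: for the second contraction $\sum R_{\alpha\rho\beta\gamma}\xi_\alpha\xi_\beta\xi_\gamma\xi_\rho$ the relevant antisymmetry is in the pair $(\alpha,\rho)$ (or equivalently $(\beta,\gamma)$), not $(\alpha,\gamma)$ as you wrote.
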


\begin{proof}
    It follows from \eqref{8.15} and \eqref{7.1.2} that
    \begin{align}\label{a4}
        \sum_{\alpha,\beta} \frac{\partial w_1}{\partial \xi_\alpha}\frac{\partial w_1}{\partial \xi_\beta}\frac{\partial^2 w_1}{\partial x_{\alpha} \partial x_{\beta}}(x_0)
        =\frac{1}{2}w_1^{-3} \sum_{\alpha,\beta,\gamma,\rho} g^{\gamma\rho,\alpha\beta}\xi_\alpha\xi_\beta\xi_{\gamma}\xi_\rho.
    \end{align}
    Note that
\begin{align}\label{8.85}
    \sum_{\alpha,\beta,\gamma,\rho} g^{\gamma\rho,\alpha\beta}\xi_\alpha\xi_\beta\xi_\gamma\xi_\rho \notag 
    &\cong \biggl(\sum_{\alpha=\beta=\gamma=\rho}+\sum_{\alpha=\beta\neq\gamma=\rho}+\sum_{\alpha=\gamma\neq\beta=\rho}+\sum_{\alpha=\rho\neq\beta=\gamma}\biggr)g^{\gamma\rho,\alpha\beta}\xi_\alpha\xi_\beta\xi_\gamma\xi_\rho \notag\\
    &=\sum_\alpha g^{\alpha\alpha,\alpha\alpha}\xi_\alpha^4 + \sum_{\alpha\neq\gamma}g^{\gamma\gamma,\alpha\alpha}\xi_\alpha^2\xi_\gamma^2 +\sum_{\alpha\neq\beta}g^{\alpha\beta,\alpha\beta}\xi_\alpha^2\xi_\beta^2 +\sum_{\alpha\neq\beta}g^{\beta\alpha,\alpha\beta}\xi_\alpha^2\xi_\beta^2 \notag\\
    &=\sum_\alpha g^{\alpha\alpha,\alpha\alpha}\xi_\alpha^4 + \sum_{\alpha\neq\beta}(g^{\beta\beta,\alpha\alpha}+2g^{\alpha\beta,\alpha\beta})\xi_\alpha^2\xi_\beta^2. 
\end{align}
From formula $(3.51)$ in \cite[p.\,555]{Taylor11.2}, we have
\begin{align}\label{7.1.5}
    g_{\alpha\beta,\gamma\rho}(x_0)=g_{\gamma\rho,\alpha\beta}(x_0)=\frac{1}{3}(R_{\alpha\gamma\beta\rho}+R_{\alpha\rho\beta\gamma})(x_0).
\end{align}
Note also that $\sum_k g_{jk}g^{kl}=\delta_{jl}$. We have
\begin{align*}
    0=\sum_k \frac{\partial^2}{\partial x_m\partial x_\alpha} (g_{jk}g^{kl})
    =\sum_k(g_{jk,m\alpha}g^{kl}+g_{jk,\alpha}g^{kl,m}+g_{jk,m}g^{kl,\alpha}+g_{jk}g^{kl,m\alpha}).
\end{align*}
Combining \eqref{6.3}, \eqref{6.3.1} and the above equality, we obtain
\begin{align}
    g_{jl,m\alpha}(x_0)= -g^{jl,m\alpha}(x_0).
\end{align}
In particular,
\begin{align}
    \label{7.1.1} g^{\alpha\beta,n\gamma}(x_0)&=-g_{\alpha\beta,n\gamma}(x_0)=0,\\
    \label{7.1.1.1} g^{\alpha\beta,\gamma\rho}(x_0)&=-g_{\alpha\beta,\gamma\rho}(x_0).
\end{align}
It follows from \eqref{7.1.5}, \eqref{7.1.1.1}, and \eqref{rm} that, at $x_0$,
\begin{align}
    \label{8.86} g^{\alpha\alpha,\alpha\alpha}(x_0)
    &=-\frac{2}{3}R_{\alpha\alpha\alpha\alpha}=0,\\
    (g^{\beta\beta,\alpha\alpha}+2g^{\alpha\beta,\alpha\beta})(x_0)
    \label{8.86.1} &=-\frac{1}{3}(2R_{\beta\alpha\beta\alpha}+2R_{\alpha\alpha\beta\beta}+2R_{\alpha\beta\beta\alpha})
    =0.
\end{align}
By \eqref{8.85}, \eqref{8.86}, and \eqref{8.86.1}, we conclude that (see also (6.56) in \cite{LiuTan23})
\begin{align}\label{8.87}
    \sum_{\alpha,\beta,\gamma,\rho} g^{\gamma\rho,\alpha\beta}(x_0)\xi_\alpha\xi_\beta\xi_\gamma\xi_\rho
    \cong 0.
\end{align}
Combining \eqref{a4} and \eqref{8.87} we immediately get \eqref{16.6}.

By \eqref{2.6} we see that $w_1=|\xi^\prime|$ is even in $\xi^{\prime}$. Thus, $\frac{\partial w_1}{\partial \xi_{\alpha}}$ is odd in $\xi^{\prime}$ and $\frac{\partial^2 w_1}{\partial x_{\alpha} \partial x_{\beta}}$ is even in $\xi^{\prime}$. From \eqref{2.7}, we know that $\frac{\partial w_0}{\partial \xi_\alpha}(\phi,V)$ is even in $\xi^{\prime}$, whereas $\frac{\partial^2 w_0}{\partial \xi_{\alpha} \partial \xi_{\beta}}(\phi,V)$ is odd in $\xi^{\prime}$. This implies \eqref{16.7} and \eqref{16.8}.
\end{proof}

Recall that $\frac{\partial s_{-1}}{\partial x_\alpha}(x_0)=0$ by \eqref{8.6}, and $w_1$ and $s_{-1}$ do not involve $\phi$ or $V$. It follows from \eqref{2.18} and \eqref{16.8} that
\begin{align}\label{8.32}
    s_{-4}(\phi,V) & = -s_{-1}
    \bigg[
        w_0s_{-3} + w_{-1}s_{-2} + w_{-2}s_{-1} - i\sum_\alpha \Big(\frac{\partial w_1}{\partial \xi_\alpha} \frac{\partial s_{-3}}{\partial x_\alpha} + \frac{\partial w_0}{\partial \xi_\alpha} \frac{\partial s_{-2}}{\partial x_\alpha} \Big) \notag\\
        & \quad - \frac{1}{2} \sum_{\alpha,\beta}\frac{\partial^2 w_1}{\partial \xi_{\alpha} \partial \xi_{\beta}} \frac{\partial^2 s_{-2}}{\partial x_{\alpha} \partial x_{\beta}}
    \bigg]. 
\end{align}
By \eqref{2.17} and \eqref{2.8} we have
\begin{align}\label{a7}
    s_{-3}(x_0) 
    = -s_{-1} \biggl(w_0s_{-2} + w_{-1}s_{-1} - i \sum_\alpha\frac{\partial w_1}{\partial \xi_\alpha} \frac{\partial s_{-2}}{\partial x_\alpha} -\frac{1}{2} \sum_{\alpha,\beta} \frac{\partial^2 w_1}{\partial \xi_{\alpha} \partial \xi_{\beta}} \frac{\partial^2 s_{-1}}{\partial x_{\alpha} \partial x_{\beta}}\biggr),
\end{align}
and
\begin{align}\label{a8}
    w_{-1}(x_0) 
        & = \frac{1}{2} w_1^{-1} 
        \bigg[
            -w_0^2 + i\sum_\alpha \frac{\partial w_1}{\partial \xi_\alpha}\frac{\partial w_0}{\partial x_\alpha}  + \frac{1}{2}\sum_{\alpha,\beta} \frac{\partial^2 w_1}{\partial \xi_{\alpha} \partial \xi_{\beta}}\frac{\partial^2 w_1}{\partial x_{\alpha} \partial x_{\beta}} \notag\\
            & \quad + bw_0 + \frac{\partial w_0}{\partial x_n} - c_0 
        \bigg].
\end{align}
From \eqref{2.14} and \eqref{2.16} we get
\begin{align}
    \label{a5} &\frac{\partial s_{-2}}{\partial x_\alpha}(x_0)
    = - s_{-1}^2\frac{\partial w_0}{\partial x_\alpha} - is_{-1}^3 \sum_\beta \frac{\partial w_1}{\partial \xi_\beta}\frac{\partial^2 w_1}{\partial x_{\alpha} \partial x_{\beta}},\\
    \label{a6} &\frac{\partial^2 s_{-1}}{\partial x_{\alpha} \partial x_{\beta}}(x_0)
    =-s_{-1}^2\frac{\partial^2 w_1}{\partial x_{\alpha} \partial x_{\beta}}.
\end{align}
Substituting \eqref{8.4}, \eqref{a5}, \eqref{a6}, and \eqref{a8} into \eqref{a7}, and combining \eqref{16.6}, we obtain
\begin{align}\label{a15}
    w_0s_{-3}(x_0)
    &\cong \Big(s_{-1}^3 + \frac{1}{2}s_{-1}^2w_1^{-1}\Big)w_0^3-i\Big(s_{-1}^3 + \frac{1}{2}s_{-1}^2w_1^{-1}\Big)w_0\sum_\alpha \frac{\partial w_1}{\partial \xi_\alpha} \frac{\partial w_0}{\partial x_\alpha} \notag\\
    & \quad -\Big(\frac{1}{2}s_{-1}^3 + \frac{1}{4}s_{-1}^2w_1^{-1}\Big)w_0\sum_{\alpha,\beta} \frac{\partial^2 w_1}{\partial \xi_{\alpha} \partial \xi_{\beta}} \frac{\partial^2 w_1}{\partial x_{\alpha} \partial x_{\beta}} \notag\\
    & \quad -\frac{1}{2}s_{-1}^2w_1^{-1}\Big(bw_0^2 + w_0\frac{\partial w_0}{\partial x_n} - w_0c_0 \Big).
\end{align}

Combining \eqref{8.4} and \eqref{a8} we get
\begin{align}\label{a16}
    w_{-1}s_{-2}(x_0)
    &=\frac{1}{2}s_{-1}^2w_1^{-1}w_0^3-\frac{i}{2}s_{-1}^2w_1^{-1}w_0\sum_\alpha \frac{\partial w_1}{\partial \xi_\alpha} \frac{\partial w_0}{\partial x_\alpha}-\frac{1}{4}s_{-1}^2w_1^{-1}w_0\sum_{\alpha,\beta} \frac{\partial^2 w_1}{\partial \xi_{\alpha} \partial \xi_{\beta}} \frac{\partial^2 w_1}{\partial x_{\alpha} \partial x_{\beta}} \notag\\
    & \quad -\frac{1}{2}s_{-1}^2w_1^{-1}\Big(bw_0^2 + w_0\frac{\partial w_0}{\partial x_n} - w_0c_0 \Big).
\end{align}

By \eqref{2.9} and \eqref{2.8} we see that
\begin{align}
    \label{a9}w_{-2}(\phi,V)
    &= \frac{1}{2} w_1^{-1} 
    \bigg[
        -2w_0w_{-1} + i\sum_\alpha \Big(\frac{\partial w_1}{\partial \xi_\alpha}\frac{\partial w_{-1}}{\partial x_\alpha}+\frac{\partial w_0}{\partial \xi_\alpha}\frac{\partial w_0}{\partial x_\alpha}  \Big) \notag\\
        & \quad + \frac{1}{2} \sum_{\alpha,\beta} \frac{\partial^2 w_1}{\partial \xi_{\alpha} \partial \xi_{\beta}} \frac{\partial^2 w_0}{\partial x_{\alpha} \partial x_{\beta}}  + bw_{-1} + \frac{\partial w_{-1}}{\partial x_n}
    \bigg],\\
    \label{a10}\frac{\partial w_{-1}}{\partial x_\alpha}(\phi,V)
    &= \frac{1}{2} w_1^{-1}
    \bigg[
        - 2w_0\frac{\partial w_0}{\partial x_\alpha}  + i\sum_\beta \Big(\frac{\partial w_1}{\partial \xi_\beta}\frac{\partial^2 w_0}{\partial x_\alpha \partial x_\beta} + \frac{\partial w_0}{\partial \xi_\beta}\frac{\partial^2 w_1}{\partial x_\alpha \partial x_\beta}\Big) \notag\\
        &\quad + \frac{\partial b}{\partial x_\alpha} w_0 + b \frac{\partial w_0}{\partial x_\alpha}+ \frac{\partial^2 w_0}{\partial x_n\partial x_\alpha} - \frac{\partial c_0}{\partial x_\alpha}
    \bigg],\\
    \label{a12}\frac{\partial w_{-1}}{\partial x_n}(\phi,V)
    &=-\frac{1}{2}w_1^{-2}\frac{\partial w_1}{\partial x_n}
        \bigg(
            -w_0^2 + i\sum_\alpha \frac{\partial w_1}{\partial \xi_\alpha}\frac{\partial w_0}{\partial x_\alpha}  + bw_0 + \frac{\partial w_0}{\partial x_n} - c_0 
        \bigg) \notag\\
        &\quad +\frac{1}{2} w_1^{-1} 
        \bigg[
            - 2w_0\frac{\partial w_0}{\partial x_n} + i\sum_\alpha \Big(\frac{\partial^2 w_1}{\partial \xi_\alpha\partial x_n}\frac{\partial w_0}{\partial x_\alpha} + \frac{\partial w_1}{\partial \xi_\alpha}\frac{\partial^2 w_0}{\partial x_n\partial x_\alpha}\Big) \notag\\
            & \quad + \frac{\partial b}{\partial x_n} w_0 + b\frac{\partial w_0}{\partial x_n} + \frac{\partial^2 w_0}{\partial x_n^2} - \frac{\partial c_0}{\partial x_n}
        \bigg].
\end{align}
Substituting \eqref{a8}, \eqref{a10}, and \eqref{a12} into \eqref{a9}, we have
\begin{align}\label{a17}
    &s_{-1}w_{-2}(\phi,V)
    \cong \frac{1}{2}s_{-1}w_1^{-2}w_0^3 -is_{-1}w_1^{-2}w_0\sum_{\alpha}\frac{\partial w_1}{\partial \xi_\alpha}\frac{\partial w_{0}}{\partial x_\alpha}\notag\\
    & \quad -\frac{1}{4}s_{-1}w_1^{-2}w_0\sum_{\alpha,\beta} \frac{\partial^2 w_1}{\partial \xi_{\alpha} \partial \xi_{\beta}} \frac{\partial^2 w_1}{\partial x_{\alpha} \partial x_{\beta}} -\frac{3}{4}s_{-1}w_1^{-2}bw_0^2 -s_{-1}w_1^{-2}w_0\frac{\partial w_0}{\partial x_n} +\frac{1}{2}s_{-1}w_1^{-2} w_0c_0 \notag\\
    & \quad -\frac{1}{4}s_{-1}w_1^{-2}\sum_{\alpha,\beta}\frac{\partial w_1}{\partial \xi_\alpha}\frac{\partial w_1}{\partial \xi_\beta}\frac{\partial^2 w_0}{\partial x_{\alpha} \partial x_{\beta}}+\frac{i}{4}s_{-1}w_1^{-2}w_0\sum_{\alpha}\frac{\partial w_1}{\partial \xi_\alpha}\frac{\partial b}{\partial x_\alpha}\notag\\
    & \quad +\frac{i}{2}s_{-1}w_1^{-2}b\sum_{\alpha}\frac{\partial w_1}{\partial \xi_\alpha}\frac{\partial w_0}{\partial x_\alpha}+\frac{i}{2}s_{-1}w_1^{-2}\sum_{\alpha}\frac{\partial w_1}{\partial \xi_\alpha}\frac{\partial^2 w_0}{\partial x_{n} \partial x_{\alpha}}+\frac{i}{2}s_{-1}w_1^{-1}\sum_{\alpha}\frac{\partial w_0}{\partial \xi_\alpha}\frac{\partial w_0}{\partial x_{\alpha}}\notag\\
    & \quad +\frac{1}{4}s_{-1}w_1^{-1}\sum_{\alpha,\beta}\frac{\partial^2 w_1}{\partial \xi_\alpha\partial \xi_\beta}\frac{\partial^2 w_0}{\partial x_{\alpha}\partial x_{\beta}}+\frac{1}{8}s_{-1}w_1^{-2}b\sum_{\alpha,\beta}\frac{\partial^2 w_1}{\partial \xi_\alpha\partial \xi_\beta}\frac{\partial^2 w_1}{\partial x_{\alpha}\partial x_{\beta}}+\frac{1}{4}s_{-1}w_1^{-2}b^2w_0\notag\\
    & \quad +\frac{1}{2}s_{-1}w_1^{-2}b\frac{\partial w_0}{\partial x_n}-\frac{1}{4}s_{-1}w_1^{-2}bc_0+\frac{1}{4}s_{-1}w_1^{-3}\frac{\partial w_1}{\partial x_n}w_0^2-\frac{i}{4}s_{-1}w_1^{-3}\frac{\partial w_1}{\partial x_n}\sum_{\alpha}\frac{\partial w_1}{\partial \xi_\alpha}\frac{\partial w_0}{\partial x_{\alpha}}\notag\\
    & \quad -\frac{1}{4}s_{-1}w_1^{-3}\frac{\partial w_1}{\partial x_n}bw_0-\frac{1}{4}s_{-1}w_1^{-3}\frac{\partial w_1}{\partial x_n}\frac{\partial w_0}{\partial x_n}+\frac{1}{4}s_{-1}w_1^{-3}\frac{\partial w_1}{\partial x_n}c_0\notag\\
    & \quad +\frac{i}{4}s_{-1}w_1^{-2}\sum_{\alpha}\frac{\partial^2 w_1}{\partial \xi_\alpha\partial x_n}\frac{\partial w_0}{\partial x_{\alpha}}+\frac{1}{4}s_{-1}w_1^{-2}\Big(\frac{\partial b}{\partial x_n}w_0+\frac{\partial^2 w_0}{\partial x_n^2}-\frac{\partial c_0}{\partial x_n}\Big).
\end{align}

It follows from \eqref{2.17} that
\begin{align}\label{a13}
    \frac{\partial s_{-3}}{\partial x_\alpha}(\phi,V)
    &= -s_{-1}
    \bigg[
        \frac{\partial w_0}{\partial x_\alpha}s_{-2} + w_0\frac{\partial s_{-2}}{\partial x_\alpha} + \frac{\partial w_{-1}}{\partial x_\alpha}s_{-1} - i \sum_\beta
        \bigg(
            \frac{\partial w_1}{\partial \xi_\beta} \frac{\partial^2 s_{-2}}{\partial x_\alpha\partial x_\beta} \notag\\
            &\qquad + \frac{\partial w_0}{\partial \xi_\beta} \frac{\partial^2 s_{-1}}{\partial x_\alpha\partial x_\beta}
        \bigg)
    \bigg]. 
\end{align}
From \eqref{2.16} we get
\begin{align}\label{8.39}
    \frac{\partial^2 s_{-2}}{\partial x_\alpha\partial x_\beta}(\phi,V)
    =2s_{-1}^3w_0\frac{\partial^2 w_1}{\partial x_\alpha\partial x_\beta} -s_{-1}^2\frac{\partial^2 w_0}{\partial x_\alpha\partial x_\beta}.
\end{align}

Substituting \eqref{8.4}, \eqref{a5}, \eqref{a6}, \eqref{a10}, and \eqref{8.39} into \eqref{a13} we obtain
\begin{align}\label{a14}
    \frac{\partial s_{-3}}{\partial x_\alpha}(\phi,V)
    &= (2s_{-1}^3+s_{-1}^2w_1^{-1})w_0\frac{\partial w_0}{\partial x_{\alpha}} +3is_{-1}^4w_0\sum_\beta \frac{\partial w_1}{\partial \xi_\beta}\frac{\partial^2 w_1}{\partial x_{\alpha} \partial x_{\beta}} \notag\\
    &\quad-i\Big(s_{-1}^3+\frac{1}{2}s_{-1}^2w_1^{-1}\Big)\sum_{\beta}\Big(\frac{\partial w_1}{\partial \xi_\beta}\frac{\partial^2 w_0}{\partial x_{\alpha} \partial x_{\beta}}+\frac{\partial w_0}{\partial \xi_\beta}\frac{\partial^2 w_1}{\partial x_{\alpha} \partial x_{\beta}}\Big)\notag\\
    &\quad -\frac{1}{2}s_{-1}^2w_1^{-1}\Big(\frac{\partial b}{\partial x_{\alpha}}w_0+b\frac{\partial w_0}{\partial x_{\alpha}}+\frac{\partial^2 w_0}{\partial x_{n}\partial x_{\alpha}}-\frac{\partial c_0}{\partial x_{\alpha}}\Big).
\end{align}

Since $\sum_{\alpha}\frac{\partial w_1}{\partial \xi_{\alpha}}\frac{\partial c_0}{\partial x_{\alpha}}$ is odd in $\xi^{\prime}$, then combining \eqref{16.6}, \eqref{16.7}, and \eqref{a14} we have
\begin{align}\label{a18}
    &\Big(-i\sum_{\alpha}\frac{\partial w_1}{\partial \xi_{\alpha}}\frac{\partial s_{-3}}{\partial x_\alpha}\Big)(\phi,V)
    \cong -i(2s_{-1}^3+s_{-1}^2w_1^{-1})w_0\sum_{\alpha}\frac{\partial w_1}{\partial \xi_{\alpha}}\frac{\partial w_0}{\partial x_{\alpha}} \notag\\
    &\quad-\Big(s_{-1}^3+\frac{1}{2}s_{-1}^2w_1^{-1}\Big)\sum_{\alpha,\beta}\frac{\partial w_1}{\partial \xi_{\alpha}}\frac{\partial w_1}{\partial \xi_\beta}\frac{\partial^2 w_0}{\partial x_{\alpha} \partial x_{\beta}} +\frac{i}{2}s_{-1}^2w_1^{-1}\Big(w_0\sum_{\alpha}\frac{\partial w_1}{\partial \xi_{\alpha}}\frac{\partial b}{\partial x_{\alpha}} \notag\\
    &\quad +b\sum_{\alpha}\frac{\partial w_1}{\partial \xi_{\alpha}}\frac{\partial w_0}{\partial x_{\alpha}}+\sum_{\alpha}\frac{\partial w_1}{\partial \xi_{\alpha}}\frac{\partial^2 w_0}{\partial x_{n}\partial x_{\alpha}}\Big).
\end{align}

Combining \eqref{a5} and \eqref{16.7} we have
\begin{align}\label{a19}
    \Big(-i\sum_{\alpha}\frac{\partial w_0}{\partial \xi_{\alpha}}\frac{\partial s_{-2}}{\partial x_\alpha}\Big)(\phi,V)
    \cong is_{-1}^2\sum_{\alpha}\frac{\partial w_0}{\partial \xi_{\alpha}}\frac{\partial w_0}{\partial x_\alpha}.
\end{align}
By \eqref{8.39} we get 
\begin{align}\label{a20}
    \Big(-\frac{1}{2}\sum_{\alpha,\beta}\frac{\partial^2 w_1}{\partial \xi_\alpha\partial \xi_\beta}\frac{\partial^2 s_{-2}}{\partial x_{\alpha} \partial x_{\beta}}\Big)(\phi,V)
    &\cong -s_{-1}^3w_0\sum_{\alpha,\beta}\frac{\partial^2 w_1}{\partial \xi_\alpha\partial \xi_\beta}\frac{\partial^2 w_1}{\partial x_{\alpha} \partial x_{\beta}} \notag\\
    &\quad +\frac{1}{2}s_{-1}^2\sum_{\alpha,\beta}\frac{\partial^2 w_1}{\partial \xi_\alpha\partial \xi_\beta}\frac{\partial^2 w_0}{\partial x_{\alpha} \partial x_{\beta}}.
\end{align}
Substituting \eqref{a15}, \eqref{a16}, \eqref{a17}, \eqref{a18}, \eqref{a19}, and \eqref{a20} into \eqref{8.32}, we then have
\begin{align}\label{a25}
    &s_{-4}(\phi,V)
    \cong -\Big(s_{-1}^4+s_{-1}^3w_1^{-1}+\frac{1}{2}s_{-1}^2w_1^{-2}\Big)w_0^3 +i\Big(3s_{-1}^4+2s_{-1}^3w_1^{-1}+s_{-1}^2w_1^{-2}\Big) \notag\\
    &\quad \times w_0\sum_\alpha\frac{\partial w_1}{\partial \xi_\alpha}\frac{\partial w_0}{\partial x_\alpha} +\Big(\frac{3}{2}s_{-1}^4+\frac{1}{2}s_{-1}^3w_1^{-1}+\frac{1}{4}s_{-1}^2w_1^{-2}\Big)w_0\sum_{\alpha,\beta}\frac{\partial^2 w_1}{\partial \xi_\alpha\partial \xi_\beta}\frac{\partial^2 w_1}{\partial x_{\alpha} \partial x_{\beta}}  \notag\\
    &\quad +\Big(s_{-1}^3w_1^{-1}+\frac{3}{4}s_{-1}^2w_1^{-2}\Big)bw_0^2 +\Big(s_{-1}^3w_1^{-1}+s_{-1}^2w_1^{-2}\Big)w_0 \frac{\partial w_0}{\partial x_{n}} -\Big(s_{-1}^3w_1^{-1}+\frac{1}{2}s_{-1}^2w_1^{-2}\Big) \notag\\
    &\quad \times w_0c_0 +\Big(s_{-1}^4+\frac{1}{2}s_{-1}^3w_1^{-1}+\frac{1}{4}s_{-1}^2w_1^{-2}\Big)\sum_{\alpha,\beta}\frac{\partial w_1}{\partial \xi_\alpha}\frac{\partial w_1}{\partial \xi_\beta}\frac{\partial^2 w_0}{\partial x_\alpha\partial x_\beta}  \notag\\
    &\quad -i\Big(\frac{1}{2}s_{-1}^3w_1^{-1}+\frac{1}{4}s_{-1}^2w_1^{-2}\Big)w_0\sum_\alpha\frac{\partial w_1}{\partial \xi_\alpha}\frac{\partial b}{\partial x_\alpha} -\frac{i}{2}\Big(s_{-1}^3w_1^{-1}+s_{-1}^2w_1^{-2}\Big)b\sum_\alpha\frac{\partial w_1}{\partial \xi_\alpha}\frac{\partial w_0}{\partial x_\alpha}  \notag\\
    &\quad -\frac{i}{2}\Big(s_{-1}^3w_1^{-1}+s_{-1}^2w_1^{-2}\Big)\sum_\alpha\frac{\partial w_1}{\partial \xi_\alpha}\frac{\partial^2 w_0}{\partial x_n \partial x_\alpha} -i\Big(s_{-1}^3+\frac{1}{2}s_{-1}^2w_1^{-1}\Big)\sum_\alpha\frac{\partial w_0}{\partial \xi_\alpha}\frac{\partial w_0}{\partial x_\alpha}  \notag\\
    &\quad -\Big(\frac{1}{2}s_{-1}^3+\frac{1}{4}s_{-1}^2w_1^{-1}\Big)\sum_{\alpha,\beta}\frac{\partial^2 w_1}{\partial \xi_\alpha\partial \xi_\beta}\frac{\partial^2 w_0}{\partial x_{\alpha} \partial x_{\beta}} -\frac{1}{8}s_{-1}^2w_1^{-2}b\sum_{\alpha,\beta}\frac{\partial^2 w_1}{\partial \xi_\alpha\partial \xi_\beta}\frac{\partial^2 w_1}{\partial x_{\alpha} \partial x_{\beta}}  \notag\\
    &\quad -\frac{1}{4}s_{-1}^2w_1^{-2}b^2w_0 -\frac{1}{2}s_{-1}^2w_1^{-2}b\frac{\partial w_0}{\partial x_n} +\frac{1}{4}s_{-1}^2w_1^{-2}bc_0 -\frac{1}{4}s_{-1}^2w_1^{-3}\frac{\partial w_1}{\partial x_n}w_0^2  \notag\\
    &\quad +\frac{i}{4}s_{-1}^2w_1^{-3}\frac{\partial w_1}{\partial x_n}\sum_\alpha\frac{\partial w_1}{\partial \xi_\alpha}\frac{\partial w_0}{\partial x_\alpha} +\frac{1}{4}s_{-1}^2w_1^{-3}\frac{\partial w_1}{\partial x_n}bw_0 +\frac{1}{4}s_{-1}^2w_1^{-3}\frac{\partial w_1}{\partial x_n}\frac{\partial w_0}{\partial x_n}  \notag\\
    &\quad -\frac{1}{4}s_{-1}^2w_1^{-3}\frac{\partial w_1}{\partial x_n}c_0 -\frac{i}{4}s_{-1}^2w_1^{-2}\sum_\alpha\frac{\partial^2 w_1}{\partial \xi_\alpha \partial x_n}\frac{\partial w_0}{\partial x_\alpha}  \notag\\
    &\quad -\frac{1}{4}s_{-1}^2w_1^{-2}\Big(\frac{\partial b}{\partial x_n}w_0+\frac{\partial^2 w_0}{\partial x_n^2}-\frac{\partial c_0}{\partial x_n}\Big).
\end{align}

By \eqref{b5} we get 
\begin{align}
    w_0^3(\phi,V)
    \cong \frac{1}{8}\Big[b^3+3b^2w_1^{-1}\frac{\partial w_1}{\partial x_n}+3bw_1^{-2}\Big(\frac{\partial w_1}{\partial x_n}\Big)^2+3bw_1^{-2}c_1^2+3w_1^{-3}\frac{\partial w_1}{\partial x_n}c_1^2\Big].
\end{align}
From \eqref{2.7}, \eqref{8.7}, and \eqref{7.1.3} we compute
\begin{align}\label{a21}
    \frac{\partial w_0}{\partial x_\alpha}(x_0)
        = \frac{1}{2} w_1^{-1} 
        \biggl(
            i\sum_{\beta}\frac{\partial w_1}{\partial \xi_\beta}\frac{\partial^2 w_1}{\partial x_\alpha \partial x_\beta} + \frac{\partial b}{\partial x_\alpha}w_1  - \frac{\partial c_1}{\partial x_\alpha}
        \biggr).
\end{align}
Combining \eqref{a21} and \eqref{16.6} we then get
\begin{align}\label{a22}
    \Big(\sum_\alpha\frac{\partial w_1}{\partial \xi_\alpha}\frac{\partial w_0}{\partial x_\alpha}\Big)(x_0)
    \cong \frac{1}{2}\sum_\alpha\frac{\partial w_1}{\partial \xi_\alpha}\frac{\partial b}{\partial x_\alpha} -\frac{1}{2}w_1^{-1}\sum_\alpha\frac{\partial w_1}{\partial \xi_\alpha}\frac{\partial c_1}{\partial x_\alpha}.
\end{align}
Thus, it follows from \eqref{a22} and \eqref{b5} we have
\begin{align}
    &\Big(w_0\sum_\alpha\frac{\partial w_1}{\partial \xi_\alpha}\frac{\partial w_0}{\partial x_\alpha}\Big)(\phi,V)
    \cong -\frac{1}{4}w_1^{-1}c_1\sum_\alpha\frac{\partial w_1}{\partial \xi_\alpha}\frac{\partial b}{\partial x_\alpha} -\frac{1}{4}w_1^{-1}b\sum_\alpha\frac{\partial w_1}{\partial \xi_\alpha}\frac{\partial c_1}{\partial x_\alpha} \notag\\
    &\qquad -\frac{1}{4}w_1^{-2}\frac{\partial w_1}{\partial x_n}\sum_\alpha\frac{\partial w_1}{\partial \xi_\alpha}\frac{\partial c_1}{\partial x_\alpha},\\
    &\Big(w_0\sum_{\alpha,\beta}\frac{\partial^2 w_1}{\partial \xi_\alpha\partial \xi_\beta}\frac{\partial^2 w_1}{\partial x_{\alpha} \partial x_{\beta}}\Big)(\phi,V)
    \cong \frac{1}{2}b\sum_{\alpha,\beta}\frac{\partial^2 w_1}{\partial \xi_\alpha\partial \xi_\beta}\frac{\partial^2 w_1}{\partial x_{\alpha} \partial x_{\beta}},\\
    &(bw_0^2)(\phi,V)
    \cong \frac{1}{4}\Big[b^3+bw_1^{-2}\Big(\frac{\partial w_1}{\partial x_n}\Big)^2+bw_1^{-2}c_1^2+2b^2w_1^{-1}\frac{\partial w_1}{\partial x_n}\Big].
\end{align}
By \eqref{2.7} we compute
\begin{align}\label{a24}
    \frac{\partial w_0}{\partial x_n}(x_0)
    = -\frac{1}{2}w_1^{-2}\Big(\frac{\partial w_1}{\partial x_n}\Big)^2 +\frac{1}{2}w_1^{-2}\frac{\partial w_1}{\partial x_n}c_1 +\frac{1}{2}\frac{\partial b}{\partial x_n} +\frac{1}{2}w_1^{-1}\frac{\partial^2 w_1}{\partial x_n^2} -\frac{1}{2}w_1^{-1}\frac{\partial c_1}{\partial x_n}.
\end{align}
Then,
\begin{align}
    \Big(w_0\frac{\partial w_0}{\partial x_n}\Big)(\phi,V)
    &\cong -\frac{1}{4}w_1^{-2}\Big(\frac{\partial w_1}{\partial x_n}\Big)^2b +\frac{1}{4}b\frac{\partial b}{\partial x_n} +\frac{1}{4}w_1^{-1}b\frac{\partial^2 w_1}{\partial x_n^2} +\frac{1}{4}w_1^{-1}\frac{\partial b}{\partial x_n}\frac{\partial w_1}{\partial x_n} \notag\\
    &\quad -\frac{1}{4}w_1^{-3}\frac{\partial w_1}{\partial x_n}c_1^2 +\frac{1}{4}c_1\frac{\partial c_1}{\partial x_n}.
\end{align}
From \eqref{b5} and \eqref{6.9} we have
\begin{align}
    (w_0c_0)(\phi,V)
    \cong \frac{1}{2}\Big(b+w_1^{-1}\frac{\partial w_1}{\partial x_n}\Big)V.
\end{align}
It follows from \eqref{2.7} that 
\begin{align}\label{a23}
    \frac{\partial^2 w_0}{\partial x_{\alpha}\partial x_{\beta}}(\phi,V)
    \cong \frac{1}{2} \frac{\partial^2 b}{\partial x_{\alpha}\partial x_{\beta}}.
\end{align}
We then have
\begin{align}
    \Big(\sum_{\alpha,\beta}\frac{\partial w_1}{\partial \xi_{\alpha}}\frac{\partial w_1}{\partial \xi_{\beta}}\frac{\partial^2 w_0}{\partial x_{\alpha}\partial x_{\beta}}\Big)(\phi,V)
    \cong \frac{1}{2}\sum_{\alpha,\beta}\frac{\partial w_1}{\partial \xi_{\alpha}}\frac{\partial w_1}{\partial \xi_{\beta}}\frac{\partial^2 b}{\partial x_{\alpha}\partial x_{\beta}}.
\end{align}
By \eqref{b5} we get
\begin{align}
    \Big(w_0\sum_{\alpha}\frac{\partial w_1}{\partial \xi_{\alpha}}\frac{\partial b}{\partial x_{\alpha}}\Big)(\phi,V)
    \cong -\frac{1}{2}w_1^{-1}c_1\sum_{\alpha}\frac{\partial w_1}{\partial \xi_{\alpha}}\frac{\partial b}{\partial x_{\alpha}}.
\end{align}
From \eqref{a21} we have
\begin{align}
    \Big(b\sum_{\alpha}\frac{\partial w_1}{\partial \xi_{\alpha}}\frac{\partial w_0}{\partial x_{\alpha}}\Big)(\phi,V)
    \cong -\frac{1}{2}w_1^{-1}b\sum_{\alpha}\frac{\partial w_1}{\partial \xi_{\alpha}}\frac{\partial c_1}{\partial x_{\alpha}}.
\end{align}
From \eqref{2.7} we see that 
\begin{align}
    \frac{\partial^2 w_0}{\partial x_{n} \partial x_{\alpha}}(\phi,V)
    =\frac{1}{2}\frac{\partial^2 }{\partial x_{n} \partial x_{\alpha}}(b-w_1^{-1}c_1).
\end{align}
Thus,
\begin{align}
    \Big(\sum_{\alpha}\frac{\partial w_1}{\partial \xi_{\alpha}}\frac{\partial^2 w_0}{\partial x_{n} \partial x_{\alpha}}\Big)(\phi,V)
    &\cong -\frac{1}{2}w_1^{-1}\sum_{\alpha}\frac{\partial w_1}{\partial \xi_{\alpha}}\frac{\partial^2 c_1}{\partial x_{n} \partial x_{\alpha}}  +\frac{1}{2}w_1^{-2}\frac{\partial w_1}{\partial x_{n}}\sum_{\alpha}\frac{\partial w_1}{\partial \xi_{\alpha}}\frac{\partial c_1}{\partial x_{\alpha}}.
\end{align}

It follows from \eqref{2.7} and \eqref{a21} that
\begin{align}
    \Big(\sum_{\alpha}\frac{\partial w_0}{\partial \xi_{\alpha}}\frac{\partial w_0}{\partial x_{\alpha}}\Big)(\phi,V)
    &\cong \frac{1}{4}w_1^{-2}c_1\sum_{\alpha}\frac{\partial w_1}{\partial \xi_{\alpha}}\frac{\partial b}{\partial x_{\alpha}} -\frac{1}{4}w_1^{-1}\sum_{\alpha}\frac{\partial c_1}{\partial \xi_{\alpha}}\frac{\partial b}{\partial x_{\alpha}} \notag\\
    &\quad +\frac{1}{4}w_1^{-3}\frac{\partial w_1}{\partial x_{n}}\sum_{\alpha}\frac{\partial w_1}{\partial \xi_{\alpha}}\frac{\partial c_1}{\partial x_{\alpha}} -\frac{1}{4}w_1^{-2}\sum_{\alpha}\frac{\partial^2 w_1}{\partial \xi_{\alpha}\partial x_n}\frac{\partial c_1}{\partial x_{\alpha}}.
\end{align}
By \eqref{a23} we have
\begin{align}
    \Big(\sum_{\alpha,\beta}\frac{\partial^2 w_1}{\partial \xi_{\alpha} \partial \xi_{\beta}}\frac{\partial^2 w_0}{\partial x_{\alpha}\partial x_{\beta}}\Big)(\phi,V)
    \cong \frac{1}{2}\sum_{\alpha,\beta}\frac{\partial^2 w_1}{\partial \xi_{\alpha} \partial \xi_{\beta}}\frac{\partial^2 b}{\partial x_{\alpha}\partial x_{\beta}}.
\end{align}
By \eqref{b5} we get
\begin{align}
    (b^2w_0)(\phi,V)
    \cong \frac{1}{2}b^3+\frac{1}{2}w_1^{-1}b^2\frac{\partial w_1}{\partial x_{n}}.
\end{align}
Recall that \eqref{a24}, we then obtain
\begin{align}
    \Big(b\frac{\partial w_0}{\partial x_{n}}\Big)(\phi,V)
    \cong -\frac{1}{2}w_1^{-2}b\Big(\frac{\partial w_1}{\partial x_{n}}\Big)^2 +\frac{1}{2}b\frac{\partial b}{\partial x_{n}} +\frac{1}{2}w_1^{-1}b\frac{\partial^2 w_1}{\partial x_{n}^2}.
\end{align}
From \eqref{b6} we have
\begin{align}
    \Big(\frac{\partial w_1}{\partial x_{n}}w_0^2\Big)(\phi,V)
    \cong \frac{1}{4}\frac{\partial w_1}{\partial x_{n}}b^2 +\frac{1}{4}w_1^{-2}c_1^2\frac{\partial w_1}{\partial x_{n}} +\frac{1}{2}w_1^{-1}b\Big(\frac{\partial w_1}{\partial x_{n}}\Big)^2.
\end{align}
Using \eqref{a22} we get 
\begin{align}
    \Big(\frac{\partial w_1}{\partial x_{n}}\sum_{\alpha}\frac{\partial w_1}{\partial \xi_{\alpha}}\frac{\partial w_0}{\partial x_{\alpha}}\Big)(\phi,V)
    \cong -\frac{1}{2}w_1^{-1}\frac{\partial w_1}{\partial x_{n}}\sum_{\alpha}\frac{\partial w_1}{\partial \xi_{\alpha}}\frac{\partial c_1}{\partial x_{\alpha}}.
\end{align}
By \eqref{b5} we obtain
\begin{align}
    \Big(\frac{\partial w_1}{\partial x_n}bw_0\Big)(\phi,V)
    \cong \frac{1}{2}b^2\frac{\partial w_1}{\partial x_n} +\frac{1}{2}w_1^{-1}b\Big(\frac{\partial w_1}{\partial x_n}\Big)^2.
\end{align}
From \eqref{a24} we have
\begin{align}
    \Big(\frac{\partial w_1}{\partial x_n}\frac{\partial w_0}{\partial x_n}\Big)(\phi,V)
    \cong \frac{1}{2}\frac{\partial w_1}{\partial x_n}\frac{\partial b}{\partial x_n}.
\end{align}
Using \eqref{a21} we get 
\begin{align}
    \Big(\sum_{\alpha}\frac{\partial^2 w_1}{\partial \xi_{\alpha}\partial x_n}\frac{\partial w_0}{\partial x_{\alpha}}\Big)(\phi,V)
    \cong -\frac{1}{2}w_1^{-1}\sum_{\alpha}\frac{\partial^2 w_1}{\partial \xi_{\alpha}\partial x_n}\frac{\partial c_1}{\partial x_{\alpha}}.
\end{align}
By \eqref{b5} we see that 
\begin{align}
    \Big(\frac{\partial b}{\partial x_n}w_0\Big)(\phi,V)
    \cong \frac{1}{2}b\frac{\partial b}{\partial x_n} +\frac{1}{2}w_1^{-1}\frac{\partial b}{\partial x_n}\frac{\partial w_1}{\partial x_n}.
\end{align}
From \eqref{2.7} we get
\begin{align}
    \frac{\partial^2 w_0}{\partial x_n^2}(\phi,V)
    \cong \frac{1}{2}\frac{\partial^2 b}{\partial x_n^2}.
\end{align}
Substituting the above results into \eqref{a25} we have 
\begin{align}\label{a27}
    &s_{-4}(\phi,V)
    \cong -\frac{1}{8}(s_{-1}^4 -s_{-1}^3w_1^{-1})b^3 -\frac{1}{8}(3s_{-1}^4w_1^{-1} -s_{-1}^3w_1^{-2} -s_{-1}^2w_1^{-3})b^2\frac{\partial w_1}{\partial x_n} \notag\\
    &\quad -\frac{3}{8}(s_{-1}^4w_1^{-2} +s_{-1}^3w_1^{-3})b\Big(\frac{\partial w_1}{\partial x_n}\Big)^2 -\frac{1}{8}(3s_{-1}^4w_1^{-2} +s_{-1}^3w_1^{-3})bc_1^2 -\frac{1}{8}(3s_{-1}^4w_1^{-3}  \notag\\
    &\quad +5s_{-1}^3w_1^{-4} +4s_{-1}^2w_1^{-5})c_1^2\frac{\partial w_1}{\partial x_n} -\frac{i}{4}(3s_{-1}^4w_1^{-1} +2s_{-1}^3w_1^{-2} +s_{-1}^2w_1^{-3})c_1\sum_{\alpha}\frac{\partial w_1}{\partial \xi_{\alpha}}\frac{\partial b}{\partial x_{\alpha}}  \notag\\
    &\quad -\frac{i}{4}(3s_{-1}^4w_1^{-1} +s_{-1}^3w_1^{-2})b\sum_{\alpha}\frac{\partial w_1}{\partial \xi_{\alpha}}\frac{\partial c_1}{\partial x_{\alpha}} -\frac{i}{4}(3s_{-1}^4w_1^{-2} +s_{-1}^3w_1^{-3} +3s_{-1}^2w_1^{-4}) \notag\\
    &\quad \times \frac{\partial w_1}{\partial x_n}\sum_{\alpha}\frac{\partial w_1}{\partial \xi_{\alpha}}\frac{\partial c_1}{\partial x_{\alpha}} +\frac{1}{8}(2s_{-1}^3w_1^{-1} -s_{-1}^2w_1^{-2})b\frac{\partial b}{\partial x_n} +\frac{1}{4}s_{-1}^3w_1^{-2}b\frac{\partial^2 w_1}{\partial x_n^2}  \notag\\
    &\quad +\frac{1}{4}(s_{-1}^3w_1^{-2}+s_{-1}^2w_1^{-3})\frac{\partial w_1}{\partial x_n}\frac{\partial b}{\partial x_n} +\frac{1}{4}(s_{-1}^3w_1^{-3}+s_{-1}^2w_1^{-4})c_1\frac{\partial c_1}{\partial x_n} -\frac{1}{2}s_{-1}^3w_1^{-1}bV  \notag\\
    &\quad -\frac{1}{2}(s_{-1}^3w_1^{-2}+s_{-1}^2w_1^{-3})V\frac{\partial w_1}{\partial x_n} +\frac{1}{8}(4s_{-1}^4 +2s_{-1}^3w_1^{-1} +s_{-1}^2w_1^{-2})\sum_{\alpha,\beta}\frac{\partial w_1}{\partial \xi_\alpha}\frac{\partial w_1}{\partial \xi_\beta}\frac{\partial^2 b}{\partial x_{\alpha} \partial x_{\beta}}  \notag\\
    &\quad +\frac{i}{4}(s_{-1}^3w_1^{-2} +s_{-1}^2w_1^{-3})\sum_{\alpha}\frac{\partial w_1}{\partial \xi_\alpha}\frac{\partial^2 c_1}{\partial x_{n} \partial x_{\alpha}} +\frac{i}{4}(s_{-1}^3w_1^{-1} +s_{-1}^2w_1^{-2})\sum_{\alpha}\frac{\partial c_1}{\partial \xi_\alpha}\frac{\partial b}{\partial x_{\alpha}}   \notag\\
    &\quad +\frac{i}{4}(s_{-1}^3w_1^{-2} +s_{-1}^2w_1^{-3}) \sum_{\alpha}\frac{\partial^2 w_1}{\partial \xi_\alpha\partial x_n}\frac{\partial c_1}{\partial x_{\alpha}} -\frac{1}{8}(2s_{-1}^3+s_{-1}^2w_1^{-1})\sum_{\alpha,\beta}\frac{\partial^2 w_1}{\partial \xi_\alpha\partial \xi_\beta}\frac{\partial^2 b}{\partial x_{\alpha} \partial x_{\beta}}  \notag\\
    &\quad -\frac{1}{8}s_{-1}^2w_1^{-2}\frac{\partial^2 b}{\partial x_n^2} +\frac{1}{4}s_{-1}^2w_1^{-2}\frac{\partial V}{\partial x_n} +\frac{1}{4}(3s_{-1}^4 +s_{-1}^3w_1^{-1})b\sum_{\alpha,\beta}\frac{\partial^2 w_1}{\partial \xi_\alpha\partial \xi_\beta}\frac{\partial^2 w_1}{\partial x_{\alpha} \partial x_{\beta}}.
\end{align}

By \eqref{3.12.1} we have
\begin{align}
    b^3(\phi,V)
    \cong -(3H^2\phi_n+3H\phi_n^2+\phi_n^3).
\end{align}
Combining \eqref{8.7} and \eqref{3.12.1} we get
\begin{align}
    &\Big(b^2\frac{\partial w_1}{\partial x_n}\Big)(\phi,V)
    \cong (2H\phi_n+\phi_n^2)w_1^{-1}\sum_{\alpha}\kappa_{\alpha}\xi_{\alpha}^2,\\
    &b\Big(\frac{\partial w_1}{\partial x_n}\Big)^2(\phi,V)
    \cong -\phi_n w_1^{-2}\sum_{\alpha,\beta}\kappa_{\alpha}\kappa_\beta\xi_{\alpha}^2\xi_{\beta}^2.
\end{align}
From \eqref{3.12.1} and \eqref{3.12.2} we have
\begin{align}
    (bc_1^2)(\phi,V)
    \cong (H+\phi_n)\sum_{\alpha}\phi_{\alpha}^2\xi_{\alpha}^2.
\end{align}
Combining \eqref{8.7} and \eqref{3.12.2} we get
\begin{align}
    \Big(\frac{\partial w_1}{\partial x_n}c_1^2\Big)(\phi,V)
    \cong -w_1^{-1}\sum_{\alpha,\beta}\kappa_{\alpha}\phi_\beta^2\xi_{\alpha}^2\xi_{\beta}^2.
\end{align}
By \eqref{3.12.2}, \eqref{8.15}, and \eqref{6.6}, we have
\begin{align}
    \Big(ic_1\sum_{\alpha}\frac{\partial w_1}{\partial \xi_{\alpha}}\frac{\partial b}{\partial x_{\alpha}}\Big)(\phi,V)
    \cong -w_1^{-1}\sum_{\alpha}\phi_{\alpha}\phi_{n\alpha}\xi_{\alpha}^2.
\end{align}

It follows from \eqref{6.8} that 
\begin{align*}
    \frac{\partial c_1}{\partial x_{\alpha}}(x_0)
    = i\sum_{\beta,\gamma}\Big(\frac{1}{2}g_{\gamma\gamma,\alpha\beta} + g^{\gamma\beta,\gamma\alpha}\Big)\xi_\beta - i \sum_{\beta}\phi_{\alpha\beta} \xi_\beta.
\end{align*}
Using \eqref{7.1.5} and \eqref{7.1.1.1}, we have 
\begin{align}\label{a26}
    \frac{\partial c_1}{\partial x_{\alpha}}(x_0)
    =-i\sum_\beta \Big(\frac{2}{3}R_{\alpha\beta}+\phi_{\alpha\beta}\Big)\xi_\beta.
\end{align}
Hence, combining this and \eqref{3.12.1}, \eqref{8.15}, and \eqref{8.7} we obtain
\begin{align}
    &\Big(ib\sum_{\alpha}\frac{\partial w_1}{\partial \xi_{\alpha}}\frac{\partial c_1}{\partial x_{\alpha}}\Big)(\phi,V)
    \cong -w_{1}^{-1}H\sum_{\alpha}\phi_{\alpha\alpha}\xi_{\alpha}^2 -w_{1}^{-1}\phi_n\sum_{\alpha}\Big(\frac{2}{3}R_{\alpha\alpha}+\phi_{\alpha\alpha}\Big)\xi_{\alpha}^2,\\
    &\Big(i\frac{\partial w_1}{\partial x_n}\sum_{\alpha}\frac{\partial w_1}{\partial \xi_{\alpha}}\frac{\partial c_1}{\partial x_{\alpha}}\Big)(\phi,V)
    \cong w_{1}^{-2}\sum_{\alpha}\kappa_\alpha\phi_{\beta\beta}\xi_{\alpha}^2\xi_{\beta}^2.
\end{align}
Using \eqref{6.6}, \eqref{6.3.2}, and \eqref{3.12.1} we get
\begin{align}
    \Big(b\frac{\partial b}{\partial x_n}\Big)(\phi,V)
    \cong H\phi_{nn}+\phi_n\Big(2\sum_{\alpha}\kappa_\alpha^2-\frac{1}{2}\sum_{\alpha}g_{\alpha\alpha,nn}+\phi_{nn}\Big).
\end{align}
From \eqref{3.12.1} and \eqref{7.1.4} we have
\begin{align}
    \Big(b\frac{\partial^2 w_1}{\partial x_n^2}\Big)(\phi,V)
    \cong -\frac{1}{2}\phi_nw_1^{-1}\sum_{\alpha}g^{\alpha\alpha,nn}\xi_{\alpha}^2+\phi_nw_1^{-3}\sum_{\alpha,\beta}\kappa_{\alpha}\kappa_\beta\xi_{\alpha}^2\xi_{\beta}^2.
\end{align}
By \eqref{6.6} and \eqref{8.7} we obtain
\begin{align}
    \Big(\frac{\partial w_1}{\partial x_n}\frac{\partial b}{\partial x_n}\Big)(\phi,V)
    \cong -\phi_{nn}w_1^{-1}\sum_{\alpha}\kappa_{\alpha}\xi_{\alpha}^2.
\end{align}
According to \eqref{6.8} we compute
\begin{align}
    \Big(c_1\frac{\partial c_1}{\partial x_n}\Big)(\phi,V)
    \cong -\sum_{\alpha}\phi_{\alpha}\phi_{n\alpha}\xi_{\alpha}^2.
\end{align}
By \eqref{3.12.1} and \eqref{8.7} we get
\begin{align}
    &(bV)(\phi,V)
    \cong -(H+\phi)V,\\
    &\Big(V\frac{\partial w_1}{\partial x_n}\Big)(\phi,V)
    \cong w_1^{-1}V\sum_{\alpha}\kappa_{\alpha}\xi_{\alpha}^2.
\end{align}
It follows from \eqref{6.6} and \eqref{6.8} that 
\begin{align}
    &\Big(\sum_{\alpha,\beta}\frac{\partial w_1}{\partial \xi_\alpha}\frac{\partial w_1}{\partial \xi_\beta}\frac{\partial^2 b}{\partial x_{\alpha} \partial x_{\beta}}\Big)(\phi,V)
    \cong -w_1^{-2}\sum_{\alpha}\phi_{n\alpha\alpha}\xi_{\alpha}^2,\\
    &\Big(i\sum_{\alpha}\frac{\partial w_1}{\partial \xi_\alpha}\frac{\partial^2 c_1}{\partial x_{n} \partial x_{\alpha}}\Big)(\phi,V)
    \cong w_1^{-1}\sum_{\alpha}\phi_{n\alpha\alpha}\xi_{\alpha}^2,\\
    &\Big(i\sum_{\alpha}\frac{\partial c_1}{\partial \xi_\alpha}\frac{\partial b}{\partial x_{\alpha}}\Big)(\phi,V)
    \cong -\sum_\alpha \phi_\alpha\phi_{n\alpha}.
\end{align}

It follows from \eqref{8.7} that
\begin{align*}
    \frac{\partial^2 w_1}{\partial \xi_\alpha \partial x_n}(x_0)=2w_1^{-1}\kappa_\alpha\xi_\alpha-w_1^{-3}\xi_{\alpha}\sum_\beta \kappa_\beta\xi_\beta^2.
\end{align*}
Combining this and \eqref{a26} we get
\begin{align}
    \Big(i\sum_{\alpha}\frac{\partial^2 w_1}{\partial \xi_\alpha\partial x_n}\frac{\partial c_1}{\partial x_{\alpha}}\Big)(\phi,V)
    \cong 2w_1^{-1}\sum_\alpha\kappa_\alpha\phi_{\alpha\alpha}\xi_{\alpha}^2-w_1^{-3}\sum_{\alpha,\beta}\kappa_\alpha\phi_{\beta\beta}\xi_{\alpha}^2\xi_{\beta}^2.
\end{align}

Recall that $w_1=|\xi^\prime|$ by \eqref{2.6}. We compute
\begin{align*}
    \frac{\partial^2 w_1}{\partial \xi_\alpha\partial\xi_\beta}=w_1^{-1}g^{\alpha\beta}-w_1^{-3}\xi^\alpha\xi^\beta,\quad
    \frac{\partial^2 w_1}{\partial \xi_\alpha\partial\xi_\beta}(x_0)=w_1^{-1}\delta_{\alpha\beta}-w_1^{-3}\xi_\alpha\xi_\beta.
\end{align*}
Combining this and \eqref{6.6}, \eqref{7.1.2}, and using \eqref{7.1.5} and \eqref{7.1.1.1}, we obtain
\begin{align}
    &\Big(\sum_{\alpha,\beta}\frac{\partial^2 w_1}{\partial \xi_\alpha\partial \xi_\beta}\frac{\partial^2 b}{\partial x_{\alpha} \partial x_{\beta}}\Big)(\phi,V)
    \cong -w_1^{-1}\sum_{\alpha}\phi_{n\alpha\alpha}+w_1^{-3}\sum_{\alpha}\phi_{n\alpha\alpha}\xi_{\alpha}^2,\\
    &\frac{\partial^2 b}{\partial x_n^2}(\phi,V)
    \cong -\phi_{nnn},\\
    &\Big(b\sum_{\alpha,\beta}\frac{\partial^2 w_1}{\partial \xi_\alpha\partial \xi_\beta}\frac{\partial^2 w_1}{\partial x_{\alpha} \partial x_{\beta}}\Big)(\phi,V)
    \cong -\frac{1}{3}\phi_nw_1^{-2}\sum_{\alpha}R_{\alpha\alpha}\xi_\alpha^2.
\end{align}
Substituting the above results into \eqref{a27} we then get
\begin{align}
    &s_{-4}(\phi,V)
    \cong \frac{1}{8}(s_{-1}^4-s_{-1}^3w_1^{-1})(3H^2\phi_n+3H\phi_n^2+\phi_n^3) -\frac{1}{8}\Big((3s_{-1}^4w_1^{-2}-s_{-1}^3w_1^{-3} \notag\\
    &\quad -s_{-1}^2w_1^{-4})(2H\phi_n+\phi_n^2)+2(s_{-1}^3w_1^{-3}+s_{-1}^2w_1^{-4})\phi_{nn}\Big)\sum_\alpha\kappa_\alpha\xi_\alpha^2 +\frac{1}{8}(3s_{-1}^4w_1^{-4} \notag\\
    &\quad +5s_{-1}^3w_1^{-5})\phi_n\sum_{\alpha,\beta}\kappa_\alpha\kappa_\beta\xi_\alpha^2\xi_\beta^2 -\frac{1}{8}(3s_{-1}^4w_1^{-2}+s_{-1}^3w_1^{-3})(H+\phi_n)\sum_\alpha\phi_\alpha^2\xi_\alpha^2  \notag\\
    &\quad +\frac{1}{8}(3s_{-1}^4w_1^{-4}+5s_{-1}^3w_1^{-5}+4s_{-1}^2w_1^{-6})\sum_{\alpha,\beta}\kappa_\alpha\phi_\beta^2\xi_\alpha^2\xi_\beta^2 +\frac{1}{4}(3s_{-1}^4w_1^{-2}+s_{-1}^3w_1^{-3}) \notag\\
    &\quad \times \Big(\sum_\alpha\phi_\alpha\phi_{n\alpha}\xi_\alpha^2 +(H+\phi_n)\sum_\alpha\phi_{\alpha\alpha}\xi_\alpha^2  +\frac{1}{3}\phi_n\sum_\alpha R_{\alpha\alpha}\xi_\alpha^2\Big)  -\frac{1}{4}(3s_{-1}^4w_1^{-4}+5s_{-1}^3w_1^{-3} \notag\\
    &\quad +4s_{-1}^2w_1^{-6})\sum_{\alpha,\beta}\kappa_\alpha\phi_{\beta\beta}\xi_\alpha^2\xi_\beta^2 +\frac{1}{8}(2s_{-1}^3w_1^{-1}-s_{-1}^2w_1^{-2})H\phi_{nn} +\frac{1}{4}(2s_{-1}^3w_1^{-1}-s_{-1}^2w_1^{-2}) \notag\\
    &\quad \times \phi_n\sum_\alpha\kappa_\alpha^2 -\frac{1}{16}(2s_{-1}^3w_1^{-1}-s_{-1}^2w_1^{-2})\phi_n\sum_\alpha g_{\alpha\alpha,nn} +\frac{1}{8}(2s_{-1}^3w_1^{-1}-s_{-1}^2w_1^{-2})\phi_n\phi_{nn}  \notag\\
    &\quad -\frac{1}{8}s_{-1}^3w_1^{-3}\phi_n\sum_\alpha g^{\alpha\alpha,nn}\xi_\alpha^2 +\frac{1}{2}s_{-1}^3w_1^{-1}(H+\phi_n)V -\frac{1}{2}(s_{-1}^3w_1^{-3}+s_{-1}^2w_1^{-4})V\sum_\alpha\kappa_\alpha\xi_\alpha^2  \notag\\
    &\quad -\frac{1}{4}(2s_{-1}^4w_1^{-2}+s_{-1}^3w_1^{-3})\sum_\alpha\phi_{n\alpha\alpha}\xi_\alpha^2  -\frac{1}{8}(2s_{-1}^3w_1^{-1}+s_{-1}^2w_1^{-2})\sum_\alpha\phi_\alpha\phi_{n\alpha}  \notag\\
    &\quad +\frac{1}{2}(s_{-1}^3w_1^{3}+s_{-1}^2w_1^{-4})\sum_\alpha\kappa_\alpha\phi_{\alpha\alpha}\xi_\alpha^2  +\frac{1}{8}(2s_{-1}^3w_1^{-1}+s_{-1}^2w_1^{-2})\sum_\alpha\phi_{n\alpha\alpha}  \notag\\
    &\quad +\frac{1}{8}s_{-1}^2w_1^{-2}\phi_{nnn}+\frac{1}{4}s_{-1}^2w_1^{-2}\frac{\partial V}{\partial x_n}.
\end{align}

By applying formula \eqref{3.9}, Lemma \ref{lem4.1}, \ref{lem4.2}, and \ref{lem2.1}, we compute
\begin{align}
    a_3(\phi,V) & = \frac{\omega_{n-2}\Gamma(n-3)}{8(2\pi)^{n-1}}
        \biggl[
            \frac{n^4-9n^3+20n^2+7n-31}{2(n^2-1)}H^2\phi_n  + \frac{(n-3)(n^2-6n+6)}{2(n-1)}H\phi_n^2 \notag\\
            & \quad + \frac{(n-2)(n-3)}{6}\phi_n^3 + \frac{n^3-7n^2+9n+1}{2(n^2-1)}\phi_n\sum_{\alpha=1}^{n-1} \kappa_{\alpha}^2 + \sum_{\alpha=1}^{n-1} \phi_{\alpha}^2\kappa_{\alpha} - H\phi_{nn} \notag\\
            & \quad + \frac{n^2-6n+7}{2(n-1)}\tilde{R}\phi_n  - \frac{n^2-10n+15}{6(n-1)}R\phi_n + \frac{(n-2)(n-3)}{6(n-1)}\sum_{\alpha=1}^{n-1} \phi_{n\alpha\alpha} \notag\\
            & \quad + \Big(\frac{\partial }{\partial x_n} + (n-3)\phi_n + (n-4)H\Big) \Big(\Delta\phi - \frac{1}{2}|\nabla\phi|^2 + 2V\Big) 
        \biggr]. 
\end{align}
Therefore, combining this and \eqref{3.10}, \eqref{331}, we immediately obtain \eqref{a3}.

\begin{remark}
    Using formulas \eqref{2.9.1} and \eqref{2.14.1}, we can further calculate the lower order symbols $w_{-1-m}\ (m \geqslant 2)$ and $s_{-1-m}\ (m \geqslant 4)$. Therefore, by this procedure we can get all of the coefficients $a_k(x^\prime)$ for $0 \leqslant k \leqslant n-1$.
\end{remark}

\section{Proof of Corollary \ref{cor1.2}}\label{s7}

Since the sectional curvature of $M$ is a constant $K_0$. Then the Riemann curvature tensor of $M$ has the form (see \cite[p.\,183]{ChowKnopf04})
    \begin{align}\label{4.4}
        \tilde{R}_{jklm} = K_0(g_{jm}g_{kl} - g_{jl}g_{km}).
    \end{align}
    In this case, it follows from \eqref{ricci} and \eqref{scalar} that
    \begin{align}
        \label{9.13}
        &\tilde{R}_{jm} = K_0 \sum_{k,l}g^{kl}(g_{jm}g_{kl} - g_{jl}g_{km})= (n-1)K_0 g_{jm},\\
        \label{3.19.1}
        &\tilde{R} = \sum_{j,m}g^{jm} \tilde{R}_{jm}= n(n-1)K_0.
    \end{align}
    In boundary normal coordinates, from \eqref{4.4} and \eqref{9.13}, we have
    \begin{align}\label{3.22.3}
        \tilde{R}_{n \alpha\alpha n}(x_0)=K_0,\quad 
        \tilde{R}_{jk}(x_{0}) = (n-1)K_0 \delta_{jk}.
    \end{align}
    It follows from \eqref{4.1}, \eqref{4.2}, \eqref{3.19.1}, and \eqref{3.22.3} that, at $x_0$,
    \begin{align}\label{9.11}
        \sum_\alpha \kappa_\alpha^2(x_0)
        = (n-1)(n-2)K_0 + H^2 - R.
    \end{align}
    Moreover, at $x_0$,
    \begin{align}\label{9.12}
        \sum_\alpha \kappa_\alpha R_{\alpha\alpha}(x_0)
        &=\sum_\alpha \bigl[\kappa_\alpha (\tilde{R}_{\alpha\alpha} - \tilde{R}_{n \alpha\alpha n} + H\kappa_\alpha - \kappa_\alpha^2)\bigr] \notag\\
        &=\sum_\alpha \bigl[\kappa_\alpha ((n-1)K_0 - K_0 + H\kappa_\alpha - \kappa_\alpha^2)\bigr] \notag\\
        &=H^3 + n(n-2)HK_0 -HR -\sum_\alpha \kappa_\alpha^3. 
    \end{align}
    Substituting \eqref{3.19.1} and \eqref{9.11} into \eqref{a2} we then get \eqref{b2}. 
    
    It follows from formula \eqref{covariant} that
    \begin{align*}
        \nabla_{\frac{\partial}{\partial x_n}} \tilde{R}_{nn}
        = \frac{\partial \tilde{R}_{nn}}{\partial x_n} - 2\sum_j \Gamma^{j}_{nn} \tilde{R}_{jn}.\notag
    \end{align*}
    By \eqref{3.22.3}, we see that $\tilde{R}_{nn}(x_0)=(n-1)K_0$ is a constant, and by \eqref{chris}, we have
    \begin{align*}
        \Gamma^{k}_{nn}(x_0) = \frac{1}{2} \sum_j g^{jk}(2g_{nj,n} - g_{nn,j})
        = 0.
    \end{align*}
    Thus,
    \begin{align}\label{9.10}
        (\nabla_{\frac{\partial}{\partial x_n}}\tilde{R}_{nn})(x_0) =0.
    \end{align}
Substituting \eqref{3.19.1}, \eqref{9.11}, \eqref{3.22.3}, \eqref{9.10}, and \eqref{9.12}  into \eqref{a3} we then get \eqref{b3}. This completes the proof.

\section*{Acknowledgements}

This work was supported by the Postdoctoral Fellowship Program of CPSF under Grant Number GZC20240053 and the National Key R\&D Program of China under Grant Number 2020YFA0712800.

\end{document}